\tikzstyle{vertex}=[circle,draw=black,fill=black,inner sep=0,minimum size=3pt,text=white,font=\footnotesize]
\newtheorem*{rep@theorem}{\rep@title}
\newcommand{\newreptheorem}[2]{%
\newenvironment{rep#1}[1]{%
 \def\rep@title{#2 \ref{##1}}%
 \begin{rep@theorem}}%
 {\end{rep@theorem}}}
\theoremstyle{plain}
\newtheorem{theorem}{Theorem}[section]
\newtheorem{lemma}[theorem]{Lemma}
\newtheorem{proposition}[theorem]{Proposition}
\theoremstyle{definition}
\newtheorem{claim}[theorem]{Claim}
\newtheorem*{proposition*}{Proposition}
\newcommand\ex{\ensuremath{\mathrm{ex}}}
\newcommand\inj{\ensuremath{\mathrm{inj}}}
\newcommand\cC{{\mathcal C}}
\newcommand\cF{{\mathcal F}}
\newcommand\cN{{\mathcal N}}
\newcommand{\ignore}[1]{}
\title{Stability from graph symmetrization arguments in generalized Tur\'an problems}
\author{D\'aniel Gerbner\footnote{Alfr\'ed R\'enyi Institute of Mathematics, E-mail: \texttt{gerbner@renyi.hu.}}, Hilal Hama Karim\footnote{Department of Computer Science and Information Theory, Faculty of Electrical Engineering and Informatics, Budapest University of Technology and Economics, Műegyetem rkp. 3., H-1111 Budapest, Hungary. E-mail: \texttt{hilal.hamakarim@edu.bme.hu}}}
\date{}
\begin{document}
\maketitle
\begin{abstract}
    Given graphs $H$ and $F$, $\ex(n,H,F)$ denotes the largest number of copies of $H$ in $F$-free $n$-vertex graphs. Let $\chi(H)<\chi(F)=r+1$. We say that $H$ is \textit{$F$-Tur\'an-stable} if the following holds. For any $\varepsilon>0$ there exists $\delta>0$ such that if an $n$-vertex $F$-free graph $G$ contains at least $\ex(n,H,F)-\delta n^h$ copies of $H$, then the edit distance of $G$ and the $r$-partite Tur\'an graph is at most $\varepsilon n^2$. 
We say that $H$ is \textit{weakly $F$-Tur\'an-stable} if the same holds with the Tur\'an graph replaced by any complete $r$-partite graph $T$. It is known that such stability implies exact results in several cases.

 We show that complete multipartite graphs with chromatic number at most $r$ are weakly $K_{r+1}$-Tur\'an-stable. Answering a question of Morrison, Nir, Norin, Rza{\.z}ewski and Wesolek positively, we show that for every graph $H$, if $r$ is large enough, then $H$ is $K_{r+1}$-Tur\'an-stable. Finally, we prove that if $H$ is bipartite, then it is weakly $C_{2k+1}$-Tur\'an-stable for $k$ large enough.
\end{abstract}

\section{Introduction}

One of the central problems in extremal graph theory is giving bounds on
$\ex(n,F)$, which is the largest number of edges in $n$-vertex $F$-free graphs. Tur\'an's theorem \cite{turan} states that $\ex(n,K_{r+1})=|E(T(n,r))|$, where the \textit{Tur\'an graph} $T(n,r)$ is the complete $r$-partite graph with each part of order $\lfloor n/r\rfloor$ or $\lceil n/r\rceil$.

A natural generalization is to count other subgraphs instead of edges. Given graphs $H$ and $G$, let
$\cN(H,G)$ denote the number of isomorphic copies of $H$ in $G$. Given graphs $H$ and $F$, let $\ex(n,H,F)$ denote the largest $\cN(H,G)$ among the $n$-vertex $F$-free graphs. After several sporadic results, the systematic study of this quantity was initiated by Alon and Shikhelman \cite{alon}.

These problems are usually called \textit{generalized Tur\'an problems}.
The first such result is due to Zykov \cite{zykov}, who showed that $\ex(n,K_k,K_{r+1})=\cN(K_k,T(n,r))$. Let us briefly describe his proof. Given two vertices $u$ and $v$ of a graph $G$, we say that we \textit{symmetrize} $u$ to $v$ if we delete all the edges incident to $u$ and add all the edges of form $uw$ where $w$ is a neighbor of $v$. In other words, we change the neighborhood of $u$ to the neighborhood of $v$. 

If $u$ and $v$ are non-adjacent and $G$ is $K_{r+1}$-free, then the resulting graph $G'$ is also $K_{r+1}$-free. Indeed, a copy of $K_{r+1}$ would contain $u$ as all the new edges are incident to $u$. Then this copy cannot contain $v$, as $v$ is not adjacent to $u$. But then deleting $u$ and adding $v$ to this copy, we find another copy of $K_{r+1}$ that is also present in $G$, contradicting our assumption.

The other key property is that if $u$ is contained in $x$ copies of $K_k$ and $v$ is contained in $y$ copies of $K_k$, then this symmetrization removes $x$ and adds $y$ copies of $K_k$. Therefore, by applying the symmetrization if $x\le y$, the total number of copies of $K_k$ does not decrease.

The proof goes on by applying several such symmetrization steps. One can show that this process terminates, i.e., at one point we arrive to a graph where symmetrization cannot change the graph. This means that non-adjacent vertices have the same neighborhood, thus being non-adjacent is an equivalence relation, i.e., the resulting graph is complete multipartite (obviously with at most $r$ parts). We omit the proof that the process terminates, as it is not relevant to our work. We also omit for the same reason showing that among complete multipartite graphs with at most $r$ parts, $T(n,r)$ contains the most copies of $K_k$.

Gy\H ori, Pach and Simonovits \cite{gyps} generalized this argument, showing that for any complete multipartite graph $H$, $\ex(n,H,K_{r+1})=\cN(H,T)$ for some complete $r$-partite graph $T$ (which is not necessarily the Tur\'an graph). One could think that this is the limit of Zykov's symmetrization argument in this topic, since only cliques have the property that symmetrization cannot create them, and only complete multipartite graphs $H$ have the property that symmetrizing either $u$ to $v$ or $v$ to $u$ does not decrease the number of copies of $H$. However, some more advanced applications have appeared in the literature. One can symmetrize only to vertices $v$ satisfying some property that ensures that no $F$ will be created \cite{LW,gp}. Another example is \cite{gerb2}, where the neighborhood of $u$ is changed to the common neighborhood of a set of vertices.

Here we will present three stability results on generalized Tur\'an problems that are obtained by using symmetrization. Stability refers to the phenomenon that an $F$-free $n$-vertex graph with almost $\ex(n,H,F)$ copies of $H$ is close to the extremal graph. There are different kinds of stability, depending on what ``almost'' and ``close'' mean in the previous sentence. Here we deal with one specific kind of stability.

The \textit{edit distance} of two $n$-vertex graphs $G$ and $G'$ is the number of edges needed to be deleted and added to $G$ in order to obtain a graph isomorphic to $G'$. Let $h=|V(H)|$.
Given a graph $F$ with chromatic number $r+1$ and another graph $H$ with chromatic number at most $r$, we say that $H$ is \textit{$F$-Tur\'an-stable} if the following holds. For any $\varepsilon>0$ there exists $\delta>0$ such that if an $n$-vertex $F$-free graph $G$ contains at least $\ex(n,H,F)-\delta n^h$ copies of $H$, then the edit distance of $G$ and $T(n,r)$ is at most $\varepsilon n^2$. 
We say that $H$ is \textit{weakly $F$-Tur\'an-stable} if the same holds with $T(n,r)$ replaced by any complete $r$-partite graph $T$.

Using these notions, the classical Erd\H os-Simonovits stability theorem \cite{erd1,erd2,simi} shows that $K_2$ is $K_{r+1}$-Tur\'an-stable for every $r\ge 2$. The first stability result concerning generalized Tur\'an problems is due to Ma and Qiu \cite{mq}, who showed that $K_k$ is $K_{r+1}$-Tur\'an-stable for every $r\ge k$.
Tur\'an-stable graphs were introduced by the first author in \cite{gerb2}. It was shown in \cite{gerb2} that if $H$ is (weakly) $F$-Tur\'an-stable, it often implies that $H$ is (weakly) $F'$-Tur\'an-stable for some other graphs $F'$. 

Another important feature of these notions is that they often imply exact results on $\ex(n,H,F)$.
We say that $H$ is \textit{$F$-Tur\'an-good} if for each sufficiently large $n$ we have $\ex(n,H,F)=\cN(H,T(n,r))$. We say that $H$ is \textit{weakly $F$-Tur\'an-good} if for each sufficiently large $n$ we have that $\ex(n,H,F)=\cN(H,T)$ for some complete $r$-partite graph $T$. It was shown in \cite{gerb2} that if $H$ is weakly $F$-Tur\'an-stable and $F$ has a color-critical edge (an edge whose deletion decreases the chromatic number), then $\ex(n,H,F)=\cN(H,T)$ for some complete $r$-partite graph $T$. Some other exact results were shown in \cite{gerbi}.

The first author in \cite{gerb3} claimed that a result of Liu, Pikhurko, Sharifzadeh and Staden \cite{perfstb} implies that complete multipartite graphs are weakly $K_{r+1}$-Tur\'an-stable. This is not true. Even though our problem is in many sense simpler than theirs, it still does not fit into their general framework. Here we present a proof of this statement. Note that the case $H=C_4$ was proved in \cite{hh}.

\begin{theorem}\label{multipa}
    Let $H$ be a complete $k$-partite graph and $r\ge k$. Then $H$ is weakly $K_{r+1}$-Tur\'an-stable.
\end{theorem}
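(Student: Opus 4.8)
The plan is to run Zykov's symmetrization argument as in the Gy\H ori--Pach--Simonovits proof, but track what must happen when the number of copies of $H$ is only \emph{close} to extremal rather than equal to it. Fix $\varepsilon>0$ and suppose $G$ is $K_{r+1}$-free on $n$ vertices with $\cN(H,G)\ge \ex(n,H,K_{r+1})-\delta n^h$ for a small $\delta=\delta(\varepsilon)$ to be chosen. First I would recall the baseline count: since $\ex(n,H,K_{r+1})=\cN(H,T)=\Theta(n^h)$ for some complete $r$-partite $T$ (by Gy\H ori--Pach--Simonovits), there is a constant $c=c(H,r)>0$ with $\cN(H,G)\ge c n^h$ whenever $\delta$ is small and $n$ large; in particular $G$ has $\Omega(n^2)$ edges. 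The key monotonicity fact is that for a complete multipartite $H$, symmetrizing $u$ to $v$ (with $u,v$ nonadjacent) never decreases $\cN(H,\cdot)$ provided we always move the ``worse'' vertex to the ``better'' one, and preserves $K_{r+1}$-freeness. So we may perform a maximal sequence of such symmetrizations, ending at a complete $r$-partite graph $T^\ast$; along the way the number of copies of $H$ only goes up, so $\cN(H,T^\ast)\ge \cN(H,G)$. Since $T^\ast$ is itself complete $r$-partite, $\cN(H,T^\ast)\le \ex(n,H,K_{r+1})$, hence $\cN(H,T^\ast)=\ex(n,H,K_{r+1})\pm\delta n^h$ as well, i.e.\ $T^\ast$ is near-extremal among complete $r$-partite graphs; a short stability analysis of the (continuous, by part-ratios) optimization problem over $r$-partite graphs shows the part sizes of $T^\ast$ are within $o(n)$ of those of the optimal complete $r$-partite graph $T$, so $T^\ast$ itself is within $\varepsilon n^2/2$ of $T$ (up to iso). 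It then remains to show that the \emph{total} number of symmetrization steps in which an edge was actually changed is $O(\delta n^2/\varepsilon)$-ish, so that $G$ and $T^\ast$ are within $\varepsilon n^2/2$ of each other, and the triangle inequality for edit distance finishes it.

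Carrying this out, the steps in order are: (1) establish the $\Omega(n^h)$ lower bound on $\cN(H,G)$ and hence $\Omega(n^2)$ edges; (2) record that each legal symmetrization step is $K_{r+1}$-free-preserving and $\cN(H,\cdot)$-nondecreasing, and that a maximal sequence terminates at a complete $r$-partite $T^\ast$ on the same vertex set; (3) do the stability for the finite-dimensional optimization ``maximize $\cN(H,\cdot)$ over complete $r$-partite graphs on $n$ vertices'': the objective is (asymptotically) a fixed polynomial in the part-density vector $(x_1,\dots,x_r)$ on the simplex, its maximizers form a finite set of density vectors, and near-maximizers are close to one of them, giving that $T^\ast$ is $\varepsilon n^2/2$-close to some optimal $T$; (4) bound the number of effective symmetrization steps. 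For (4) the mechanism is the standard one: order the symmetrizations; a step that moves $u$'s neighborhood and strictly increases $\cN(H,\cdot)$ can happen, but the total increase over all steps is at most $\cN(H,T^\ast)-\cN(H,G)\le 2\delta n^h$, while one shows each ``expensive'' step (one changing $\ge \eta n$ edges) increases $\cN(H,\cdot)$ by $\ge \eta' n^{h-1}$ for some $\eta'=\eta'(\eta,H)>0$, because $G$ already has $\Omega(n^2)$ edges and so the symmetrized-to vertex $v$ lies in $\Omega(n^{h-1})$ copies of $H$; hence there are few expensive steps, and all the cheap ones together change at most $\eta n\cdot(\text{number of steps}) = O(\eta n^2)$ edges. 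Choosing $\eta,\delta$ small in terms of $\varepsilon$ makes the cumulative edit distance from $G$ to $T^\ast$ at most $\varepsilon n^2/2$.

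I expect step (4) — controlling the cumulative edit distance incurred over the whole symmetrization process — to be the main obstacle, since naively the process can take many steps and each could in principle move $\Omega(n)$ edges. The resolution I anticipate is to argue about the ``potential'' $\cN(H,\cdot)$ directly: since $H$ is complete multipartite there is a quantitative version of the monotonicity saying that symmetrizing $u$ to $v$ changes $\cN(H,\cdot)$ by exactly $(\text{copies through }v)-(\text{copies through }u)$ counted with the appropriate automorphism weight, and if $u$ and $v$ end up in different parts of $T^\ast$ or have very different degrees then this gap is $\Omega(n^{h-1})$; combined with the total-increase budget $2\delta n^h$ this bounds the number of ``large-degree-change'' symmetrizations by $O(\delta n)$, each costing $O(n)$ edges, i.e.\ $O(\delta n^2)$ total, and a separate argument handles vertices whose degree barely changes (they move at most $o(n)$ edges each, $o(n^2)$ in total). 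A secondary subtlety is that $\ex(n,H,K_{r+1})$ is only known up to lower-order terms in general, but the $\delta n^h$ slack absorbs this, and the optimization in step (3) is over a compact simplex so near-optimality really does force closeness to an optimal part-ratio vector. Finally one should double-check the case distinction $\chi(H)=k<r$ versus $k=r$: when $k<r$ the optimal $T$ may have fewer than $r$ nonempty parts in the limit, but it is still a complete $r$-partite graph (some parts of size $o(n)$), which is exactly what weak Tur\'an-stability allows, so nothing breaks.
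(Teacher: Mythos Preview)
Your overall plan---run Zykov symmetrization to reach a complete $r$-partite $T^\ast$ and then argue $G$ is close to $T^\ast$---is natural, but step (4) contains a genuine gap that your sketch does not close. The claimed dichotomy ``a step that changes $\ge\eta n$ edges increases $\cN(H,\cdot)$ by $\ge\eta' n^{h-1}$, while cheap steps move few edges'' is not justified. A symmetrization of $u$ to $v$ changes roughly $|N(u)\triangle N(v)|$ edges but increases the $H$-count by (at most) the difference between the number of copies through $v$ and through $u$; these two quantities are essentially unrelated. Two nonadjacent vertices can have equal degree and equal $H$-copy count yet $|N(u)\triangle N(v)|=\Omega(n)$ (think of vertices in non-adjacent parts of a $C_5$-blow-up), so your fallback ``vertices whose degree barely changes move at most $o(n)$ edges each'' is false as stated. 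To make this route work you would need to show that in a \emph{near-extremal} $K_{r+1}$-free graph any two nonadjacent vertices with nearly equal $H$-copy count have nearly equal neighborhoods---but that is essentially the stability you are trying to prove. Without it, the cumulative edit distance over the whole symmetrization process is not controlled by the $2\delta n^h$ budget.

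The paper avoids this problem entirely by a ``freeze at the boundary'' trick (in the spirit of Liu--Pikhurko--Sharifzadeh--Staden). Rather than summing edge changes over all steps, one lets $t$ be the \emph{last} index for which $G_t$ is still $>\varepsilon n^2$ away from every complete $r$-partite graph. Since a single symmetrization moves at most $2n$ edges and $G_{t+1}$ is $\le\varepsilon n^2$-close to some complete $r$-partite $T$, the graph $G':=G_t$ satisfies $\varepsilon n^2<\Delta_1(G',T)\le 2\varepsilon n^2$ for the nearest such $T$. One then works directly with $G'$: a short degree argument shows every vertex has few neighbours in its own part of $T$, and a case analysis (either every vertex is almost complete to the other parts, or there is a small exceptional set $B$ of vertices with many non-neighbours across parts) produces in each case an explicit $\Omega(\varepsilon n^h)$ deficit in $\cN(H,G')$ compared to the extremal value, contradicting $\cN(H,G')\ge\cN(H,G)\ge \ex(n,H,K_{r+1})-\delta n^h$. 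This localises the whole argument to a single graph $G'$ that is simultaneously far and not-too-far from complete $r$-partite, and never needs to track cumulative symmetrization cost. Your step (3), incidentally, is unnecessary for \emph{weak} stability: closeness to any complete $r$-partite graph suffices, so once $G$ is close to $T^\ast$ you would already be done.
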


Morrison, Nir, Norin, Rza{\.z}ewski and Wesolek \cite{mnnrw} showed (proving a conjecture of Gerbner and Palmer \cite{gerpal}) that for every graph $H$, if $r$ is large enough, then $H$ is $K_{r+1}$-Tur\'an-good. They ask whether this can be improved to $K_{r+1}$-Tur\'an-stability. We answer this question in the affirmative.

\begin{theorem}\label{event}
    For every graph $H$, if $r\ge 300h^9$, then $H$ is $K_{r+1}$-Tur\'an-stable.
\end{theorem}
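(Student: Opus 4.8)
The plan is to reduce Theorem~\ref{event} to the Erd\H{o}s--Simonovits stability theorem for $K_{r+1}$. Since $r\ge 300h^{9}$ is large, the theorem of Morrison, Nir, Norin, Rza{\.z}ewski and Wesolek applies, so $\ex(n,H,K_{r+1})=\cN(H,T(n,r))$ for all large $n$; moreover $\cN(H,T(m,r))=(\phi(\bar x)+o(1))\,m^{h}$ as $m\to\infty$, where $\bar x=(\tfrac1r,\dots,\tfrac1r)$ and, for $x$ in the simplex $\Delta_{r}=\{x\in\mathbb R^{r}_{\ge 0}:\sum_{i}x_{i}=1\}$, $\phi(x)=\frac1{|\Aut H|}\sum_{c}\prod_{v\in V(H)}x_{c(v)}$, the sum over proper $r$-colourings $c$ of $H$. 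Fix $\varepsilon>0$. By the Erd\H{o}s--Simonovits stability theorem (that $K_{2}$ is $K_{r+1}$-Tur\'an-stable) there is $\beta=\beta(\varepsilon,r)>0$ so that every $n$-vertex $K_{r+1}$-free graph with at least $\ex(n,K_{r+1})-\beta n^{2}$ edges is within edit distance $\varepsilon n^{2}$ of $T(n,r)$. Hence it is enough to prove: there is $\delta>0$ such that every $n$-vertex $K_{r+1}$-free graph $G$ with $e(G)\le \ex(n,K_{r+1})-\beta n^{2}$ has $\cN(H,G)\le \cN(H,T(n,r))-\delta n^{h}$ (then a near-extremal $G$ automatically has $e(G)>\ex(n,K_{r+1})-\beta n^{2}$, and Erd\H{o}s--Simonovits finishes the argument).

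To prove this quantitative statement I would run Zykov symmetrization on $G$ guided by a weighted objective. Fix a small $\mu>0$ and put $\Psi(G)=\cN(H,G)+\mu n^{h-2}e(G)$. Repeatedly pick two non-adjacent vertices with distinct neighbourhoods and symmetrize one onto the other, always in whichever direction does not decrease $\Psi$; as $G$ stays $K_{r+1}$-free, on termination one reaches a complete $r$-partite graph $G^{*}$. The content of a single step is that replacing $N(u)$ by $N(v)$ changes $\cN(H,\cdot)$ by (copies of $H$ through $v$ but not $u$) minus (copies through $u$), plus a correction that involves only copies of $H$ using two non-adjacent vertices of $H$, hence of size $O(n^{h-2})$; with a careful order of symmetrizations (tracking twin classes so that only few essentially distinct configurations occur) the corrections sum to $O(n^{h-1})$, so $\Psi(G^{*})\ge\Psi(G)-O(n^{h-1})$. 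Unwinding this and using $\cN(H,G^{*})\le\cN(H,T(n,r))$ gives two facts once one assumes, for contradiction, $\cN(H,G)>\cN(H,T(n,r))-\delta n^{h}$: first $e(G)\le e(G^{*})+\mu^{-1}\delta n^{2}+O(n)$, and second $\cN(H,G^{*})\ge\cN(H,T(n,r))-(\delta+\tfrac12\mu+o(1))n^{h}$, so that $G^{*}$ is a near-extremal complete $r$-partite graph.

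Now I would use that for $r\ge 300h^{9}$ the polynomial $\phi$ attains its maximum over $\Delta_{r}$ at $\bar x$ (the continuous shadow of the Morrison--Nir--Norin--Rza{\.z}ewski--Wesolek theorem), that $\bar x$ is in fact the unique maximiser (extractable from their argument, or checkable directly since for $r$ this large $\phi$ is a small perturbation of a constant times the elementary symmetric polynomial $e_{h}$), and that the maximum is \emph{nondegenerate}: by the $S_{r}$-symmetry of $\phi$ its Hessian at $\bar x$ restricted to the tangent space of $\Delta_{r}$ is a scalar multiple of the identity, and a short computation with the chromatic polynomial shows this scalar equals $(1+o(1))\bigl(-2e(H)/|\Aut H|\bigr)$, which is negative since $e(H)\ge 1$. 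Thus $\phi(\bar x)-\phi(x)\ge c\lVert x-\bar x\rVert^{2}$ near $\bar x$ and $\phi(\bar x)-\phi(x)$ is bounded below by a constant away from $\bar x$; hence $\cN(H,G^{*})\ge\cN(H,T(n,r))-\eta^{2}n^{h}$ with $\eta^{2}=\delta+\tfrac12\mu+o(1)$ forces every part of $G^{*}$ to have size within $O(\eta)n$ of $n/r$, so $e(G^{*})\ge\ex(n,K_{r+1})-O(\eta^{2})n^{2}+O(n)$. Combining with $e(G)\le e(G^{*})+\mu^{-1}\delta n^{2}+O(n)$ and choosing $\mu=\sqrt\delta$ yields $e(G)\ge\ex(n,K_{r+1})-O(\sqrt\delta)\,n^{2}$, contradicting $e(G)\le\ex(n,K_{r+1})-\beta n^{2}$ once $\delta$ is small enough that the $O(\sqrt\delta)$ term is below $\beta$. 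This proves the quantitative statement, hence the theorem.

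The main obstacle is the bookkeeping inside the symmetrization: one must order the steps so that the process provably terminates quickly and the accumulated $O(n^{h-2})$ corrections stay $o(n^{h})$, while \emph{simultaneously} keeping control of the edge count (the reason for the weighted objective $\Psi$), since a symmetrization that preserves the number of $H$-copies can still move the edge count, and the edit distance to $T(n,r)$, by $\Theta(n^{2})$. The hypothesis $r\ge 300h^{9}$ enters in two places: it is needed to invoke the Morrison--Nir--Norin--Rza{\.z}ewski--Wesolek theorem (both for the value $\ex(n,H,K_{r+1})$ and for locating the maximum of $\phi$ at $\bar x$ and making it unique), and it guarantees the maximum is nondegenerate, which is exactly what converts ``$G^{*}$ is near-extremal'' into ``$G^{*}$ has nearly balanced parts''.
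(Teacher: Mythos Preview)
Your approach is genuinely different from the paper's. The paper does \emph{not} symmetrize all the way to a complete multipartite graph. Following \cite{mnnrw}, it only symmetrizes vertices of degree at most $(1-\beta)n$ to a vertex $v_0$ with maximum $\inj(v_0)$, and shows that each such step strictly \emph{increases} $\inj(H,\cdot)$ by at least $\beta n^{h-1}/2$. Since $\inj(H,\cdot)\le\inj(H,T(n,r))$ (by \cite{mnnrw}), there are at most $2\delta n/\beta$ such steps, so the edit distance to $T(n,r)$ moves by $O(\delta n^2)$. The resulting $\beta$-dense graph $G'$ is then handled by F\"uredi's stability theorem (Theorem~\ref{fure}) together with Lemma~\ref{densi}, which directly compares $\inj(H,T(n,r))-\inj(H,G')$ to $(e(T(n,r))-e(G''))n^{h-2}$ and yields the contradiction.

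The gap in your approach is exactly the one you flag as the ``main obstacle,'' and it is real. You need two things simultaneously: (a) at each step the chosen direction decreases $\Psi$ by at most $O(n^{h-2})$, and (b) the total number of steps is $O(n)$ so the accumulated loss is $O(n^{h-1})$. Requirement (a) forces the direction of each step (symmetrize the vertex of smaller $\psi$-contribution to the one of larger). Requirement (b) is usually obtained by a GPS-type schedule (fix $v_1$ with largest $\psi$, symmetrize all non-neighbours to it, recurse), which terminates in at most $n$ steps but \emph{also} forces the direction. These two constraints conflict: with the GPS schedule, after $t$ symmetrizations to $v_1$ the $\psi$-values of $v_1$ and of the remaining non-neighbours have each drifted by $O(tn^{h-2})$, so the $(t{+}1)$st forced step can cost $\Theta(tn^{h-2})$; summing gives a loss of order $|A_1|^2 n^{h-2}$, and across recursion levels this is $\Theta(n^h)$, not $o(n^h)$. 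Conversely, if you always take the $\Psi$-preserving direction, you lose the termination bound, and there is no visible potential function bounding the number of steps by $o(n^2)$. The paper bypasses this entirely because each of its symmetrization steps has a uniform positive gain $\beta n^{h-1}/2$ rather than a mere $-O(n^{h-2})$ lower bound, so both termination and error control follow immediately from the upper bound on $\inj$.
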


Finally, we obtain a theorem of a similar flavor. 

\begin{theorem}\label{cycles}
    For every bipartite graph $H$, if $k>(2h)^{h+1}$, then
    $H$ is weakly $C_{2k+1}$-Tur\'an-stable. 
\end{theorem}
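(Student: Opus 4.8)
We may assume $H$ has at least one edge and in fact no isolated vertices, since deleting an isolated vertex of $H$ only scales every $\cN(H,\cdot)$ by a factor $(1+o(1))n$ and does not affect closeness to a complete bipartite graph. The extremal graphs here are complete bipartite, and it suffices to prove the following reparametrised form: there is a function $f$ with $f(\delta)\to 0$ as $\delta\to 0$ such that every $n$-vertex $C_{2k+1}$-free $G$ with $\cN(H,G)\ge\ex(n,H,C_{2k+1})-\delta n^h$ is within $f(\delta)n^2$ edges of some $K_{a,n-a}$. Throughout I will use $\ex(n,H,C_{2k+1})=(1+o(1))\max_a\cN(H,K_{a,n-a})=\Theta(n^h)$; the lower bound is trivial, and the matching upper bound either is known or can be read off from the argument below. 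The plan then has three stages. First, \emph{clean}: repeatedly delete a vertex of current degree below $\gamma n$, where $\gamma=\gamma(\delta,h)$ is a small fixed constant. This destroys only $O(h\gamma n^h)\le\delta n^h$ copies of $H$, and — because $\cN(H,G)$ is within $\delta n^h$ of the asymptotic value of $\ex$ — removes only $o_\delta(n)$ vertices; the resulting graph $G'$ satisfies $\delta(G')\ge\gamma n$ and $\cN(H,G')\ge(1-o_\delta(1))\ex(n,H,C_{2k+1})$.

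The core stage is to show that $G'$ is within $f_1(\delta)n^2$ edges of a bipartite graph. Since $\cN(H,G')=\Theta(n^h)$, a crude count gives $e(G')=\Omega(n^2)$, so $G'$ contains a subgraph $G''$ with minimum degree at least $c_0 n$ for a fixed $c_0=c_0(h)>0$. As $G''$ is $C_{2k+1}$-free with linear minimum degree, results on cycle lengths in graphs of linear minimum degree show that, unless $G''$ is bipartite, it contains a cycle of every length in an interval $[K_0(c_0),\,\Omega(n)]$; this is precisely where the hypothesis $k>(2h)^{h+1}$ is spent, since it guarantees $2k+1>K_0(c_0(h))$, so a non-bipartite $G''$ would contain $C_{2k+1}$, which is impossible. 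Hence $G''$ is bipartite; fix a $2$-colouring of its (boundedly many, each large) components. Now propagate the colouring to $G'$: any vertex $v$ adjacent to both colour classes inside a single component of $G''$ would, together with a path of the appropriate odd length through that component (paths of all short odd lengths exist there because the component has linear minimum degree and $n$ is large), close up to a copy of $C_{2k+1}$; so every vertex is colour-consistent with $G''$, and one argues that any remaining set of $\gg f_1(\delta)n^2$ monochromatic edges would again produce a $C_{2k+1}$ routed through the dense bipartite backbone. Thus, after deleting $f_1(\delta)n^2$ edges, $G'$ is bipartite with parts $A,B$.

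The last stage passes from ``close to bipartite'' to ``close to complete bipartite''. Here $G'[A,B]\subseteq K_{|A|,|B|}$, and a supersaturation estimate for copies of a bipartite graph in a bipartite host — equivalently, symmetrising within the parts $A$ and $B$ exactly as in the proof of Theorem~\ref{multipa}, which keeps the graph bipartite and does not decrease $\cN(H,\cdot)$ — shows that if $G'[A,B]$ were missing $\eta|A||B|$ edges then it would contain at most $(1-c(\eta))\cN(H,K_{|A|,|B|})$ copies of $H$; since $\cN(H,G')\ge(1-o_\delta(1))\cN(H,K_{|A|,|B|})$, this forces $\eta=o_\delta(1)$. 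Finally, reinserting the $o_\delta(n)$ vertices removed during cleaning — each of low degree, hence re-attachable at a cost of at most $n$ edits apiece — changes the edit distance by only $o_\delta(n^2)$, and combining the three stages yields the claimed $f(\delta)n^2$ bound.

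The main obstacle is the second stage: one must locate the linear-minimum-degree backbone, certify its bipartiteness via a cycle-spectrum result, and spread the resulting bipartition over all of $G'$, all while having spent only $o_\delta(n^h)$ copies of $H$ and $o_\delta(n)$ vertices (so that the cleaning can be undone cheaply). It is exactly the requirement that $2k+1$ exceed the cycle-spectrum threshold $K_0(c_0(h))$, together with the combinatorial room needed to build odd cycles of length exactly $2k+1$ near copies of $H$, that is responsible for the bound $k>(2h)^{h+1}$.
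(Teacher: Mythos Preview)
Your three-stage plan is different from the paper's argument, and stage~2 as written has genuine gaps. The paper does not hunt for a dense subgraph and invoke a cycle-spectrum theorem. Instead it first applies the removal lemma: since $\ex(n,C_{2m+1},C_{2k+1})=o(n^{2m+1})$ for every $m<k$, one can delete $o(n^2)$ edges from $G$ to obtain a graph $G'$ that is free of \emph{all} odd cycles $C_3,C_5,\dots,C_{2k+1}$, losing only $o(n^h)$ copies of $H$. At this point the Andr\'asfai--Erd\H os--S\'os theorem applies directly: if $G'$ is not bipartite it has a vertex $u$ of degree at most $2n/(2k+3)$, hence in at most $2hn^{h-1}/(2k+1)$ copies of $H$. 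Symmetrising $u$ to a vertex $v$ in the most copies (at least $(\beta-o(1))hn^{h-1}$ with $\beta>1/(2h)^h$) keeps $G'$ free of $C_3,\dots,C_{2k+1}$ (this is the ``nice sequence'' property of odd cycles) and gains at least $\beta n^{h-1}/2$ copies; the threshold $k>(2h)^{h+1}$ is exactly what makes $2h/(2k+1)<\beta$. Iterating, either one reaches a bipartite graph in at most $\varepsilon n/2$ steps, so the total edit distance to $G$ is at most $\varepsilon n^2$, or the gain exceeds $\varepsilon\beta n^h/6$, contradicting $\cN(H,G)\ge\ex(n,H,C_{2k+1})-\delta n^h$.

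Your stage~2 has two concrete problems. First, the backbone $G''$ you extract may have as few as $c_0 n$ vertices while $V(G')\setminus V(G'')$ has $(1-c_0)n$; since you take $\gamma=\gamma(\delta,h)$ small, a vertex of $G'\setminus G''$ has degree only $\gamma n$ in $G'$ and need not touch $G''$ at all, so you have no mechanism to colour it. (If instead you fix $\gamma$ as an absolute constant depending only on $h$, the cleaning bound $m=o_\delta(n)$ still holds via comparison with $\ex(n-m,H,C_{2k+1})$, but then you must explain why such a $\gamma$ interacts correctly with $c_0$.) Second, both the claim that a bipartite graph of linear minimum degree contains paths of \emph{every} prescribed odd length between any two given vertices in different parts, and the claim that $\Omega(n^2)$ monochromatic edges would produce a $C_{2k+1}$ ``routed through the backbone'', are nontrivial structural statements that you assert without proof or citation; neither follows from a one-line argument. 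Your stage~3, on the other hand, is a point the paper does not spell out, and the supersaturation/symmetrisation idea you sketch there is the right way to pass from ``close to bipartite'' to ``close to complete bipartite''.
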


We note that the above theorem implies that $H$ is weakly $C_{2k+1}$-Tur\'an-good. The first author \cite{gerb3} showed for every $k$ a graph that is $C_{2k+1}$-Tur\'an-good and not weakly $C_{2\ell+1}$-Tur\'an-good for any $\ell<k$.

Let us briefly summarize what is implied by our theorems. First, the $F$-Tur\'an-stability implies $F'$-Tur\'an-stability for other graphs, as observed in \cite{gerb2} in the case $F=K_{r+1}$. We present the following more general version.

\begin{proposition}
    Let $\chi(F)=\chi(F')=r+1$ and assume that $F'$ is contained in a blowup of $F$. If $H$ is weakly $F$-Tur\'an-stable, then $H$ is weakly $F'$-Tur\'an-stable.
\end{proposition}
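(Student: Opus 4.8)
The plan is to reduce weak $F'$-Tur\'an-stability to weak $F$-Tur\'an-stability, by showing that an $F'$-free graph with almost extremally many copies of $H$ becomes, after deleting only $o(n^2)$ edges, an $F$-free graph still having almost extremally many copies of $H$, and then invoking the hypothesis. Since $F'$ is contained in a blow-up of $F$, fix $t$ with $F'\subseteq F[t]$, where $F[t]$ denotes the blow-up of $F$ in which every vertex is replaced by an independent set of size $t$; then every $F'$-free graph is $F[t]$-free. Observe also that $\chi(H)\le r$ (this is implicit in $H$ being weakly $F$-Tur\'an-stable) and $\chi(F')=r+1$, so weak $F'$-Tur\'an-stability is a meaningful statement with the same value of $r$, and every complete $r$-partite graph on $n$ vertices is both $F$-free and $F'$-free.

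I would use two standard ingredients. First, a crude continuity of the count: adding or deleting a single edge of an $n$-vertex graph changes $\cN(H,\cdot)$ by at most $h^2 n^{h-2}$, because only copies of $H$ using that pair as an edge are affected; hence editing at most $\gamma n^2$ edges changes $\cN(H,\cdot)$ by at most $h^2\gamma\,n^h$. Second, the fact that for every graph $L$ and every $t\ge 1$, an $L[t]$-free $n$-vertex graph can be made $L$-free by deleting $o(n^2)$ edges; quantitatively, for every $\gamma>0$ there is $N(\gamma)$ such that for $n\ge N(\gamma)$ this takes at most $\gamma n^2$ deletions. This follows from a supersaturation-and-blow-up argument together with the graph removal lemma: if an $n$-vertex graph $G$ has at least $\beta n^{|V(L)|}$ copies of $L$, then, regarding the labelled copies of $L$ as a subset of $V(G)^{|V(L)|}$ of size at least $\beta n^{|V(L)|}$ and applying the K\H{o}v\'ari--S\'os--Tur\'an theorem iteratively, one finds a combinatorial box $A_1\times\cdots\times A_{|V(L)|}$ of side $t$ inside it; the parts $A_i$ are then automatically pairwise disjoint (a tuple with a repeated coordinate is not a copy of $L$) and coherently labelled, so $G\supseteq L[t]$. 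Contrapositively, an $L[t]$-free graph has $o(n^{|V(L)|})$ copies of $L$, and the removal lemma destroys all of them by deleting $o(n^2)$ edges. Taking $L=F$: every $F'$-free $n$-vertex graph is within edit distance $\gamma n^2$ of an $F$-free graph once $n\ge N(\gamma)$.

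Next I would pin down the extremal count. Let $T$ be a complete $r$-partite graph on $n$ vertices maximizing $\cN(H,T)$ among all such graphs; as $T$ is $F'$-free, $\ex(n,H,F')\ge\cN(H,T)$. Applying weak $F$-Tur\'an-stability to a graph attaining $\ex(n,H,F)$, that graph lies within edit distance $\eta n^2$ of some complete $r$-partite graph on $n$ vertices, so $\ex(n,H,F)\le\cN(H,T)+h^2\eta\,n^h\le\ex(n,H,F')+h^2\eta\,n^h$; since $\eta>0$ is arbitrary (for $n$ large), for every $\eta'>0$ there is $n_0$ with $\ex(n,H,F')\ge\ex(n,H,F)-\eta'n^h$ for all $n\ge n_0$. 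Now I would assemble everything: given $\varepsilon>0$, take $\delta_0>0$ from weak $F$-Tur\'an-stability with target $\varepsilon/2$, put $\gamma:=\min\{\varepsilon/2,\ \delta_0/(3h^2)\}$ and $\delta:=\delta_0/3$, and let $n$ exceed every threshold appearing above. If $G$ is $F'$-free on $n$ vertices with $\cN(H,G)\ge\ex(n,H,F')-\delta n^h$, delete at most $\gamma n^2$ edges to obtain an $F$-free graph $G^\ast$ on $V(G)$; then $\cN(H,G^\ast)\ge\cN(H,G)-h^2\gamma\,n^h\ge\ex(n,H,F')-\tfrac{2\delta_0}{3}n^h\ge\ex(n,H,F)-\delta_0 n^h$, using $\ex(n,H,F')\ge\ex(n,H,F)-\tfrac{\delta_0}{3}n^h$. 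Weak $F$-Tur\'an-stability then places $G^\ast$ within edit distance $\tfrac{\varepsilon}{2}n^2$ of a complete $r$-partite graph $T'$ on $n$ vertices, and the triangle inequality for the edit distance gives edit distance at most $\gamma n^2+\tfrac{\varepsilon}{2}n^2\le\varepsilon n^2$ between $G$ and $T'$, which is what is required.

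The only step beyond bookkeeping is the second ingredient, and within it the sole genuinely technical point is extracting a consistently labelled, pairwise-disjoint blow-up $F[t]$ from a positive density of copies of $F$; since this is entirely standard (everything else being the tracking of the $o(n^h)$ and $o(n^2)$ error terms and one use of the triangle inequality for edit distance), I do not expect a real obstacle.
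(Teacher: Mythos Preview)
Your argument is correct and follows essentially the same route as the paper: remove $o(n^2)$ edges to make the $F'$-free graph $F$-free, check that the count of $H$ drops by only $o(n^h)$, compare the two extremal values via a complete $r$-partite graph, apply weak $F$-Tur\'an-stability, and finish with the triangle inequality. The only cosmetic difference is that the paper quotes the Alon--Shikhelman result $\ex(n,F,F')=o(n^{|V(F)|})$ directly, whereas you unpack it via the $F[t]$-supersaturation/K\H{o}v\'ari--S\'os--Tur\'an argument; your more explicit $\varepsilon$--$\delta$ bookkeeping is otherwise the same proof.
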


A blowup of a graph is obtained by repeatedly replacing a vertex $v$ with more vertices $v_1,\dots,v_k$ and each edge $uv$ by edges $uv_1,\dots,uv_k$.

\begin{proof}
    Let $G$ be an $F'$-free $n$-vertex graph with at least $\ex(n,H,F')-o(n^h)$ copies of $H$. Since $H$ is weakly $F$-Tur\'an-stable, for any $F$-free graph $G_0$ with $\ex(n,H,F)$ copies of $H$, there is a complete $r$-partite graph $T$ with edit distance $o(n^2)$ to $G_0$. This implies that $\cN(H,T)=\ex(n,H,F)-o(n^h)$. Note that $\ex(n,H,F')\ge \cN(H,T)$ because $T$ is $F'$-free.
    
    By a result of Alon and Sikhelman \cite{alon} we have $\ex(n,F,F')=o(n^{|V(F)|})$. By the removal lemma, there are $o(n^2)$ edges in $G$ contained in each copy of $F$. We delete those edges to obtain an $F$-free graph $G'$. Those edges are in $o(n^h)$ copies of $H$, thus $\cN(H,G')=\cN(H,G)-o(n^h)\ge \ex(n,H,F')-o(n^h)-o(n^h)\ge \cN(H,T)-o(n^h)\ge \ex(n,H,F)-o(n^h)$ using the already established bounds. Since $H$ is weakly $F$-Tur\'an-stable, there is a complete $r$-partite graph $T'$ with edit distance $o(n^2)$ to $G'$. Then the edit distance of $T'$ and $G$ is also $o(n^2)$, completing the proof.
\end{proof}

There are multiple ways in \cite{gerb3} and \cite{gerbi} to obtain new (weakly) $F$-Tur\'an-stable graphs from other $F$-Tur\'an-stable graphs. Our results give new building blocks to those methods. Finally, as we have mentioned, our new stability results give new exact results using theorems from \cite{gerb3} and \cite{gerbi}.

The rest of the paper is organized as follows. Each theorem is proved in its own section, and we finish the paper with some concluding remarks.

\section{Proof of Theorem \ref{multipa}}
Let us denote the edit distance between two graphs $G_1$ and $G_2$ on the same vertex set by $\Delta_1(G_1,G_2):=|E(G_1) \triangle E(G_2)|$. Let $H$ be a complete $k$-partite graph on $h$ vertices and $k\le r$. We first prove a lemma that will be used in the proof.

\begin{lemma}\label{partssizes}

For every sufficiently small $\gamma>0$, there exists $\zeta>0$ such that for sufficiently large $n$, if $P$ is a complete $s$-partite $K_{r+1}$-free graph on $n$ vertices with $\cN(H,P)\geq \ex(n,H,K_{r+1})-\zeta \cdot n^h$, then $s=r$ and the size of every part of $P$ is at least $\gamma n$
\end{lemma}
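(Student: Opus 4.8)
The plan is to translate the statement into a compactness fact about the ``copy-count polynomial'' of $H$, combined with a structural observation that its maximizers on the simplex have full support. We may assume $k\ge2$, since for edgeless $H$ one has $\cN(H,P)=\binom nh$ for every $P$ and the situation is degenerate. For a complete multipartite graph on $n$ vertices with parts of sizes $a_1,\dots,a_s$ and $s\le r$, the number of copies of $H$ is the polynomial $\frac1{|\Aut(H)|}\sum_{\pi}\sum_{\phi}\prod_G (a_{\phi(G)})_{|G|}$, where $\pi$ ranges over the partitions of $V(H)$ into independent sets, $\phi$ over the injections of the blocks of $\pi$ into $[s]$, $G$ over the blocks, and $(m)_j$ denotes the falling factorial. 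Hence $\cN(H,P)=f(a_1/n,\dots,a_s/n,0,\dots,0)\,n^h+O(n^{h-1})$ for a fixed polynomial $f\colon\Delta\to\mathbb R_{\ge0}$ (homogeneous of degree $h$, nonnegative coefficients) on the simplex $\Delta=\{x\in\mathbb R^r_{\ge0}:\sum_i x_i=1\}$, the implied constant depending only on $H$ and $r$. By the Gy\H ori--Pach--Simonovits theorem \cite{gyps} ($\ex(n,H,K_{r+1})=\cN(H,T)$ for some complete $r$-partite $T$) together with the construction $T(n,r)$, we get $\ex(n,H,K_{r+1})=(c+o(1))n^h$ with $c:=\max_\Delta f$, and $c>0$ because $H$ embeds into $T(n,r)$ in $\Theta(n^h)$ ways (using $k\le r$).

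The heart of the argument is the claim that every maximizer $x^\ast$ of $f$ on $\Delta$ has all $r$ coordinates strictly positive. Suppose not, and after relabeling let $\mathrm{supp}(x^\ast)=\{1,\dots,t\}$ with $t\le r-1$. If $t<k$ then $f(x^\ast)=0<c$, a contradiction; so $t\ge k$. Take a complete multipartite graph $P$ with parts of sizes $\approx x^\ast_i n$ ($1\le i\le t$), and form $P'$ by splitting the first part into two almost equal classes. Then $E(P)\subseteq E(P')$, so all copies of $H$ in $P$ persist, and moreover $P'$ has $\Omega(n^h)$ \emph{new} copies of $H$ using one of the added edges: pick an edge of $H$ between two colour classes (possible since $k\ge2$), route it onto a new edge by sending those two classes into the two halves of the split part, and send the remaining $k-2$ colour classes into $k-2$ further parts of $P$ (possible since $t\ge k$); all these target parts have $\Theta(n)$ vertices because the relevant coordinates of $x^\ast$ are positive. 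Dividing by $n^h$ and letting $n\to\infty$ yields $f(y)\ge f(x^\ast)+c'$ for some $c'>0$, where $y\in\Delta$ is the normalized part-size vector of $P'$; this contradicts $f(y)\le c=f(x^\ast)$. Consequently every maximizer has full support, and since $M:=f^{-1}(c)\cap\Delta$ is compact and $x\mapsto\min_i x_i$ is continuous and positive on $M$, there is $c_0>0$ with $\min_i y_i\ge c_0$ for every $y\in M$.

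It remains to conclude by a standard compactness step. Since $f$ is continuous on the compact set $\Delta$, for every $\eta>0$ there is $\zeta_0>0$ such that $f(x)\ge c-\zeta_0$ with $x\in\Delta$ forces $\|x-y\|_\infty<\eta$ for some $y\in M$ (else a convergent subsequence of counterexamples gives a point of $M$ at $\ell_\infty$-distance $\ge\eta$ from $M$). Given $\gamma\in(0,c_0)$ --- this is what ``$\gamma$ sufficiently small'' means --- apply this with $\eta=c_0-\gamma$ to get $\zeta_0$, and put $\zeta:=\zeta_0/2$. If $P$ is $K_{r+1}$-free and complete $s$-partite (so $s\le r$) with $\cN(H,P)\ge\ex(n,H,K_{r+1})-\zeta n^h$, then its normalized part-size vector $\bar a\in\Delta$ satisfies $f(\bar a)\ge c-\zeta-o(1)$, hence $f(\bar a)\ge c-\zeta_0$ for $n$ large (recall $\zeta=\zeta_0/2$), so $\bar a_i\ge y_i-(c_0-\gamma)\ge\gamma$ for all $i$ and some $y\in M$. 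In particular all $r$ coordinates of $\bar a$ are positive, so $s\ge r$ and thus $s=r$, and every part of $P$ has at least $\gamma n$ vertices.

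I expect the structural claim of the second paragraph to be the main obstacle: one must check that splitting a part really creates $\Omega(n^h)$ fresh copies of $H$, and it is precisely here that $k\ge2$ (an edge of $H$ to reroute) and $k\le t$ (enough remaining parts to host the rest of $H$) enter. The compactness reductions are routine, though some care is needed to see that $f$ is the leading term of the copy count uniformly over all complete multipartite graphs with at most $r$ parts, including those with very small or empty parts.
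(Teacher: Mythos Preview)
Your argument is correct, but it is a genuinely different route from the paper's. The paper proceeds by a direct vertex-moving computation: assuming some part $V_j$ has fewer than $\gamma n$ vertices, it moves half of the largest part $V_1$ (of size $pn\ge n/r$) into $V_j$ and computes, via a bipartite-count estimate between $V_1$ and $V_j$, that this increases $\cN(H,P)$ by at least $\frac{p}{2}c\,n^{h}$ for some constant $c$, contradicting near-extremality. Your proof instead packages the count as a polynomial $f$ on the simplex, shows by a part-\emph{splitting} argument that every maximizer of $f$ has full support, and then invokes compactness to get the stability of the support. Both proofs rest on the same underlying phenomenon --- a missing or tiny part can always be repaired to gain $\Omega(n^h)$ copies --- but yours reaches it via optimization on $\Delta$, while the paper does a single explicit rebalancing step. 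The paper's version is shorter and yields a more transparent (essentially linear) dependence of $\zeta$ on $\gamma$; your version is cleaner structurally and makes it obvious why $s=r$ is forced, at the cost of an inexplicit $\zeta(\gamma)$ coming from the compactness step.
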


\begin{proof}

Assume that there is some part $V_j$ with $|V_j|<\gamma n$. Let $V_1$ be the largest part, $|V_1|=pn$, thus $p\ge 1/r$.  Let $H[A\cup B]$ be an induced bipartite subgraph of $H$ with $|A|=a$ and $|B|=b$ such that $A \subseteq V_1$ and $B \subseteq V_j$. Let us move $\lfloor pn/2\rfloor$ vertices of $V_1$ to $V_j$. Then the number of copies of such bipartite subgraphs between $V_1$ and $V_j$ increases by at least 
$$\binom{\lfloor pn/2\rfloor}{a}\binom{\lceil pn/2\rceil}{b}-\left[\binom{\gamma n}{a}\binom{pn}{b}+\binom{pn}{a}\binom{\gamma n}{b}\right]\geq pc_1n^{a+b}-\gamma c_2n^{a+b}\geq \frac{p}{2}c_1n^{a+b},$$
where $c_1$ and $c_2$ are constants, and the last inequality holds because $\gamma$ is sufficiently small. Consequently, the number of copies of $H$ increases by at least $\frac{p}{2} c n^k$ (where $c$ is a constant), which is a contradiction.
\end{proof}

Now, let us begin the proof of Theorem (\ref{multipa}). Let $\gamma, \alpha, \beta, \varepsilon,\delta$ be a sequence of positive numbers each sufficiently small compared to $H$, $F$ and the previous numbers in the sequence.
We will also assume that $n$ is sufficiently large.

Assume the theorem does not hold, and hence  there is a graph $G$ with $|V(G)|=n$, for a sufficiently large $n$, such that $\cN(H,G) \geq \ex(n,H,K_{r+1})- \delta n^h$ while $\Delta_1(G,F)>\varepsilon n^2$, for every complete $r$-partite graph $F$. 

Now we apply Zykov symmetrization repeatedly. It was shown in \cite{gyps} that we can pick symmetrization steps in such a way that the number of copies of $H$ does not decrease, no $K_{r+1}$ is created and the process eventually terminates, which results in a complete multipartite graph. This gives a sequence of graphs $G=G_0,G_1, \ldots, G_l$, where $\cN(H,G_{i+1}) \geq \cN(H,G_i)$ for all $i=0,1,\ldots, l-1$, and $G_l$ is a complete $s$-partite graph. 
In particular, $\cN(H,G_l) \geq \cN(H,G)\geq \ex(n,H,K_{r+1})-\delta n^h$, and hence, by Lemma \ref{partssizes}, $s=r$, and every part of $G_l$ is linear in $n$.

Let $t$ be the largest $i$ such that $\Delta_1(G_i, T) > \varepsilon n^2$ for every complete $r$-partite graph $T$. Let $T$ be the closest graph to $G_t$ and $T'$ be the closest graph to $G_{t+1}$ among all complete $r$-partite graphs on the vertex set $V(G)$.  Observe that $\varepsilon n^2<\Delta_1(G_t, T)\le \Delta_1(G_t, T')\le\Delta_1(G_t, G_{t+1})+\Delta_1(G_{t+1}, T') \le 2n+\varepsilon n^2 \le 2\varepsilon n^2$. Let $G':=G_t$.

Note that we may have $2\varepsilon n^2$ less edges in $T$ than in $G'$, each edge could be in at most $n^{h-2}$ copies of $H$, which implies that $T$ can have at most $2\varepsilon n^h$ less copies of $H$ than $G'$, and hence we have $\cN(H,T)\geq \ex(n,H,K_{r+1})- (\delta + 2 \varepsilon) n^h$. By Lemma \ref{partssizes} again, we have that each part of $T$ has size  at least $\gamma n$.

Let $V_1, V_2, \ldots, V_r$ be the parts of $T$. For a vertex $v$, denote the part of $T$ containing it by $V_v$. Note that by the definition of $T$, each vertex $v$ have at least as many neighbors in $V_i$ as in $V_v$ for every $i$. Indeed, otherwise we could move $v$ to $V_i$ and obtain a complete multipartite graph that is closer to $G'$ than $T$.

\begin{claim}
In the graph $G'$, every vertex $v$ has less than $\beta n$ neighbors in $V_v$.
\end{claim}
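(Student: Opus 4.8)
The claim asserts that in $G' = G_t$, every vertex $v$ has fewer than $\beta n$ neighbors inside its own part $V_v$ of the near-optimal complete $r$-partite approximation $T$. I would prove this by contradiction: suppose some vertex $v$ has at least $\beta n$ neighbors in $V_v$. The key structural facts available are (i) $\cN(H,G') \ge \ex(n,H,K_{r+1}) - (\delta + 2\varepsilon) n^h$, and (ii) $T$ is $K_{r+1}$-free with all parts of linear size $\ge \gamma n$, and (iii) $T$ is the complete $r$-partite graph nearest to $G'$, so each vertex has at least as many $G'$-neighbors in every other part $V_i$ as in its own part $V_v$. The strategy is to show that a vertex with many same-part neighbors lets us build "too many" copies of $H$ — more than $\ex(n,H,K_{r+1})$ — contradicting (i), or alternatively that the symmetrization would have reduced the count or created a $K_{r+1}$.

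**The main argument.**

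First I would quantify how close $G'$ is to $T$ more carefully. Since $\Delta_1(G', T) \le 2\varepsilon n^2$, all but at most $\sqrt{\varepsilon}\, n$ vertices (say) are "good" — incident to at most $\sqrt{\varepsilon}\, n$ edges of the symmetric difference. A good vertex thus has almost all of $V_i$ in its neighborhood for each $i \ne V_v$, and almost none of $V_v$. Now suppose $v$ has $\ge \beta n$ neighbors in $V_v$. By fact (iii), $v$ then has at least $\beta n$ neighbors in each other part too, and combined with goodness of most vertices, we can locate a clique structure: pick one neighbor of $v$ in each of the $r-1$ parts other than $V_v$, chosen to be mutually adjacent good vertices (possible because each such part is linear and $T$-neighborhoods across parts are almost complete), together with $v$ and one of its $\ge \beta n$ same-part neighbors $w$. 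The edge $vw$ lies inside $V_v$, and if $w$ is also adjacent to the chosen transversal, we obtain $K_{r+1}$ in $G'$ — contradicting that the symmetrization process never creates a $K_{r+1}$. To make $w$ adjacent to the transversal, note $v$ and $w$ are both good (we may assume, by discarding a lower-order fraction), so their neighborhoods in each other part agree on all but $o(n)$ vertices; intersecting $r-1$ such almost-complete sets still leaves a positive fraction of each part, so a common transversal forming a clique exists. This yields the contradiction.

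**Alternative route and the obstacle.**

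If the "forbidden clique" route is delicate — e.g., because one cannot guarantee $v$'s many same-part neighbors are themselves good — the fallback is the counting route: a vertex $v$ with $\beta n$ neighbors in $V_v$ and (by (iii)) at least $\beta n$ in every other part participates in at least $c \beta^{h-1} n^{h-1}$ copies of $H$ using an edge inside $V_v$; but one can also show that relocating $v$ (or re-symmetrizing) to destroy these same-part edges while keeping the cross-part neighborhoods increases or preserves the $H$-count and strictly decreases $\Delta_1(\cdot, T)$, contradicting the maximality of $t$ in the definition of $G_t$ — since past step $t$ the graph is already $\varepsilon n^2$-close to some complete $r$-partite graph, and we'd be exhibiting such closeness at step $t$ itself. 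The main obstacle I anticipate is bookkeeping the two competing error terms: the $2\varepsilon n^2$ edit distance must be small enough that "most vertices are good" and the almost-complete cross-part neighborhoods survive intersecting $r-1$ of them, yet $\beta$ must be chosen after $\varepsilon$ (per the stated hierarchy $\gamma, \alpha, \beta, \varepsilon, \delta$) so that $\beta n$ same-part neighbors genuinely force structure not present in $T$. Getting the quantifier order right — $\beta$ large relative to $\varepsilon, \delta$ but small relative to $\gamma$ — is where the care is needed, but the hierarchy in the setup is exactly arranged to make this work.
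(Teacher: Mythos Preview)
Your direct clique-building route is correct and would work: once you know $v$ has at least $\beta n$ neighbors in \emph{every} part (by the minimality property (iii) of $T$), you can choose a good same-part neighbour $w$ (since at most $O(\sqrt{\varepsilon})n \ll \beta n$ vertices are bad) and then greedily pick good common neighbours of $v,w$ in the remaining $r-1$ parts; at each step the already-chosen good vertices exclude only $O(\sqrt{\varepsilon})n$ candidates while $v$ contributes at least $\beta n$, so the intersection stays nonempty and you obtain a $K_{r+1}$. Your worry that $v$'s same-part neighbours might not be good is not a real obstacle, and you do not need $v$ itself to be good. One small slip: you wrote ``$\beta$ must be chosen after $\varepsilon$'', but the hierarchy is the other way round --- $\varepsilon$ is small compared to $\beta$ --- which is what you actually use (and correctly state) in your final sentence.

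The paper takes a more compact route that avoids the good-vertex bookkeeping entirely. Instead of building the clique explicitly, it selects $\beta n$ neighbours of $v$ from each of the $r$ parts (again possible by (iii)) and lets $G''$ be the induced subgraph of $G'$ on these $n'=r\beta n$ vertices. Since $G'$ is $K_{r+1}$-free, $G''$ is $K_r$-free, so Tur\'an's theorem gives $e(G'')\le \frac{r-2}{r-1}\cdot\frac{(n')^2}{2}$. On the other hand, in $T$ these $r$ equal-sized parts span $\frac{r-1}{r}\cdot\frac{(n')^2}{2}$ edges, and $\Delta_1(G',T)\le 2\varepsilon n^2$ forces $e(G'')\ge \frac{r-1}{r}\cdot\frac{(n')^2}{2}-2\varepsilon n^2$. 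Comparing yields $\frac{r}{2(r-1)}\beta^2\le 2\varepsilon$, contradicting $\varepsilon\ll\beta$. This is essentially the averaged version of your argument --- Tur\'an's theorem does the work of locating the transversal --- and is shorter, though your explicit construction is arguably more transparent about where the forbidden $K_{r+1}$ sits.
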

\begin{proof}[Proof of the claim] For every $i=1,2,\ldots, r$, let $A_i \subseteq V_i$ consist of those vertices $v \in V_i$ that have at least $\beta n$ neighbors in $V_i$, i.e. $A_i:=\{v \in V_i : |N_{G'}(v) \cap V_i| \geq \beta n\}$. We need to show that all the $A_i$'s are empty.

Assume that $v\in A_j$. For every $i=1,2,\ldots, r$, choose $\beta n$ neighbors of $v$ from $V_i$, to form an induced subgraph $G''$ of $G'$. Clearly, $G''$ does not contain a $K_r$, as together with $v$, this would create a $K_{r+1}$ in $G'$. Now, we have that $G''$ is a $K_r$-free graph and $|V(G'')|:=n'=\beta rn$. Then, by Tur\'an's theorem we have 
$$e(G'') \leq \left(\frac{r-2}{r-1}\right)\frac{(n')^2}{2}=\left(\frac{r-1}{r} - \frac{1}{r(r-1)}\right)\frac{(n')^2}{2}=\frac{r-1}{r}\frac{(n')^2}{2}-cn^2,$$
where $c=\frac{r}{2(r-1)} \beta^2$. On the other hand, as $G''$ is an induced subgraph of $G'$, the vertex set of $G''$ consists of $r$ sets, each of them is a subset of size $\beta n$ of a different part of $T$, and $\Delta_1(G',T) \leq 2\varepsilon n^2$, we have
$$e(G'') \geq e(T[V(G'')])-2 \varepsilon n^2=\frac{r-1}{r}\frac{(n')^2}{2}-2 \varepsilon n^2.$$
Thus, we have $cn^2 \leq 2\varepsilon n^2$, which contradicts the assumption 
that $\beta$ is sufficiently small.
\end{proof}

Let us return to the proof of the theorem. Now there are two cases to consider, and we shall see both of them lead to contradictions.

\vskip3mm

\textbf{Case 1.} Every vertex has less than $\alpha n$ non-neighbors in each of the parts other than its own part. 

First, this implies there are no edges inside any of the parts, for if there is an edge inside some part, then we will have a $K_{r+1}$ in $G'$. To see this, let $v_0v_1$ be an edge inside some part, we may say $V_1$. Then, $v_0$ and $v_1$ have at least $\gamma n-2\alpha n$ common neighbors in $V_2$, choose $v_2$ among them, and then continuing this way, we can choose the vertex $v_{r-1}$ from $V_{r-1}$ forming a $K_{r}$ in the first $r-1$ parts.  The so far chosen $r$ vertices have at least $\gamma n - r\alpha n$ common neighbors in $V_r$, from which we can choose a vertex $v_r$ to form a $K_{r+1}$.

Therefore, $G'$ differs from $T$ only by missing some edges between the different parts. First we show that each edge $uv$ with $u$ and $v$ from different parts is contained in at least $(\gamma n/2)^{h-2}$ copies of $H$ in $T$. Indeed, we pick an arbitrary $r$-coloring of $H$, and identify the colors with the parts $V_i$ such that there is an edge between $V_u$ and $V_v$, and identify its endpoints with $u$ and $v$. Then we identify the other vertices of $H$ one by one with arbitrary vertices of the corresponding parts. Each time we have at least $\gamma n-h\ge \gamma n/2$ choices.

Let us bound now $\cN(H,T)-\cN(H,G')$. The more than $\varepsilon n^2$ missing edges give at least $\varepsilon(\gamma/2)^{h-2}n^h$ missing copies of $H$, but some $H$ may be counted multiple times. Those copies contain at least 2 missing edges. There are two cases. For two missing independent edges, we can present the upper bound $4\varepsilon^2n^h$ by picking two edges and four vertices at most $4\varepsilon^2n^4$ ways, and then each other vertex at most $n$ ways. For two missing edges sharing a vertex, we can present the upper bound $4\varepsilon \alpha n^h$ by picking a missing edge, an endpoint of that edge, another missing edge from the same vertex, and then each other vertex at most $n$ ways. We obtained that $\cN(H,G')<\cN(H,T)-\varepsilon((\gamma/2)^{h-2}-4\varepsilon-4\alpha) n^h<\cN(H,T)-\delta n^h\le\ex(n,H,K_{r+1})-\delta n^h\le\cN(H,G)$. This is a contradiction, since we obtained $G'$ from $G$ through the symmetrization steps.

\vskip3mm

\textbf{Case 2.} There are some vertices that have at least $\alpha n$ non-neighbors in some of the other parts.

Let $B_i\subseteq V_i$ be the set of those vertices in $V_i$ with at least $\alpha n$ non-neighbors in $V_j$ for some $j\neq i$. As $\Delta_1(G',T) \leq 2\varepsilon n^2$, we have $|B_i|\alpha n \leq 2\varepsilon n^2$, which means $|B_i| \leq \frac{2 \varepsilon}{\alpha} n$, for every $i=1,2,\ldots, r$. Hence, $|V_i \setminus B_i|=\gamma n - |B_i|\geq \gamma n-\frac{2 \varepsilon}{\alpha} n=\gamma' n$, where $\gamma'=\gamma - \frac{2 \varepsilon}{\alpha}$. 
Note that, for every $i$, each vertex $v \in V_i\setminus B_i$ has less than $\alpha n$ non-neighbors in any of the other parts. As in the previous case, this implies that there are no edges inside $V_i \setminus B_i$, for every $i$.

For each $v \in B:=\bigcup_{i=1}^r B_i$, delete the at most $\beta n$ edges incident to it inside its part and add the at least $\alpha n$ edges between $v$ and the other parts that are missing in $G'$. Let $x:=x(v)$ be the number of changes at $v$, then the number of new neighbors of $v$ is at least $x/2$ (since $\beta$ is smaller than $\alpha$). Note that  after all these changes for every vertex in $B$ are done, we obtain an $r$-partite graph $G_1$. In $G_1$, a vertex from $B$ has no non-neighbors in the other parts and a vertex outside $B$ has less than $\alpha n$ non-neighbors in any of the other parts.  Let us now observe the change in the number of copies of $H$. For each $v \in B$, we can choose one of its new neighbors in at least $\alpha n$ ways, and then we choose $h-2$ vertices from other parts. In each part $V_i$, we have at least $\gamma n$ vertices, and the $h-1$ vertices picked from the other parts each have at most $\alpha n$ non-neighbors in $V_i$, thus we can pick each vertex at least $\gamma n-(h-1)\alpha n\ge \gamma n/2$ ways. Therefore, the number of new copies of $H$ is at least $\gamma^{h-2}xn^{h-2}/2^{h-1}$, and the number of copies deleted is at most $\beta n^{h-2}$, since we have to pick $v$, one of the (at most) $\beta n$ neighbors of it in its part, and other vertices. So the increase is at least $c_0xn^{h-2}$ for some constant $c_0$, which means $\cN(H,G_1) \geq \cN(H,G')+ \sum_{v \in B} x(v) c_0n^{h-2}$. 

Note that if $\Delta_1(G_1,T) \geq \varepsilon n^2/2$, then we can use the same argument as in Case 1. The only difference is that $G'$ was missing at least $\varepsilon n^2$ edges, while $G_1$ is missing at least $\varepsilon n^2/2$ edges. The same calculation gives $\cN(H,G_1)<\cN(H,T)-\varepsilon((\gamma/2)^{h-2}-\varepsilon-2\alpha) n^h<\cN(H,T)-\delta n^h$, a contradiction. Thus, $\Delta_1(G_1,T) \leq \varepsilon n^2/2$. Hence
$$\varepsilon n^2 < \Delta_1(G',T)\leq \Delta_1(G',G_1)+\Delta_1(G_1,T)\leq \Delta_1(G',G_1)+\varepsilon n^2/2,$$
which implies $\sum_{v \in B} x(v) =\Delta_1(G',G_1) > \varepsilon n^2/2$. Therefore, the increase of the number of copies of $H$ in $G_1$ is at least $c \varepsilon n^2$, which is a contraction. This completes the proof.

\section{Proof of Theorem \ref{event}}

Our proof of Theorem \ref{event} follows the argument in \cite{mnnrw}, with a more careful analysis at some places. We will talk about injectve homomorphisms from $H$ to $G$ instead of copies of $H$ in $G$. Let $\inj(H,G)$ denote the number of injective homomorphisms from $H$ to $G$, then $\inj(H,G)=a(H)\cN(H,G)$, where $a(H)$ is the number of automorphisms of $H$. For $\beta>0$, a graph $G$ is said to be $\beta$-dense if $\deg(v)\geq (1-\beta)|V(G)|$, for every $v \in V(G)$. Theorem \ref{event} is a direct consequence of the following theorem.

\begin{theorem}\label{inje}
    For every graph $H$, if $r\ge 300h^9$, then for every $\varepsilon>0$, there exists $\delta>0$ such that every $K_{r+1}$-free $n$-vertex graph $G$ with $\inj(H,G)\ge \inj(H,T(n,r))-\delta n^h$ has edit distance at most $\varepsilon n^2$ to $T(n,r)$.
\end{theorem}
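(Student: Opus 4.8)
The plan is to prove Theorem~\ref{inje} by induction on $h=|V(H)|$, following \cite{mnnrw} while keeping all error terms explicit. First I would dispose of isolated vertices of $H$: an isolated vertex only multiplies $\inj(H,\cdot)$ by a falling-factorial factor common to every host graph, while lowering $h$, so it suffices to treat $H$ with no isolated vertex, and then $H$ has an edge. The base case $H=K_2$ is precisely the Erd\H os-Simonovits stability theorem. For the inductive step fix $H$ on $h\ge 3$ vertices and $r\ge 300h^9$, fix $\varepsilon>0$, and suppose for contradiction that for every $\delta>0$ there is a $K_{r+1}$-free $n$-vertex graph $G$ (with $n$ large) satisfying $\inj(H,G)\ge\inj(H,T(n,r))-\delta n^h$ yet of edit distance more than $\varepsilon n^2$ from $T(n,r)$. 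Since $r\ge 300h^9$ forces $r-1\ge 300(h-1)^9$, the induction hypothesis is available for all graphs on fewer than $h$ vertices against $K_r$; in particular it yields the asymptotic bound $\ex(m,H',K_r)\le\cN(H',T(m,r-1))+o(m^{|V(H')|})$ for such $H'$ (a graph beating it would, by inductive stability, have to be $o(m^2)$-close to $T(m,r-1)$, hence not beat it).

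A routine cleaning step — iteratively deleting any vertex lying in fewer than $\eta n^{h-1}$ copies of $H$ — terminates after $o(n)$ deletions because $\inj(H,T(n,r))=(1-o(1))n^h$ while $\inj(H,G')\le (n')^h$ for an induced subgraph on $n'$ vertices; so we may assume every vertex of $G$ lies in at least $\eta n^{h-1}$ copies of $H$, which forces every degree to be $\Omega(n)$. The argument is then clean when $H$ has a dominating vertex $w$. In that case $\inj(H,G)=\sum_{v\in V(G)}\inj(H-w,\,G[N_G(v)])$, each link $G[N_G(v)]$ being $K_r$-free (else $G\supseteq K_{r+1}$), so the inductive bound applied to $H-w$ gives $\inj(H-w,G[N_G(v)])\le\inj(H-w,T(\deg_G(v),r-1))+o(\deg_G(v)^{h-1})$. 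Using $\inj(H-w,T(m,r-1))=\bigl(P(H-w,r-1)/(r-1)^{h-1}\bigr)m^{h-1}+o(m^{h-1})$ (where $P(\cdot,r)$ is the chromatic polynomial at $r$) together with the identity $P(H,r)=r\,P(H-w,r-1)$ for a dominating vertex, the hypothesis $\inj(H,G)\ge\inj(H,T(n,r))-\delta n^h$ then yields $\sum_v\deg_G(v)^{h-1}\ge(1-1/r)^{h-1}n^h-O(\delta)n^h$, equivalently $\cN(K_{1,h-1},G)\ge\cN(K_{1,h-1},T(n,r))-O(\delta)n^h$.

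To finish, I would use a short optimization: $x\mapsto x(1-x)^{h-1}$ is concave on $[0,1/(h-1)]$, which contains $[0,1/r]$ since $r>h$, so $\sum_i x_i(1-x_i)^{h-1}$ on the simplex is uniquely and robustly maximized at the uniform point; hence among complete $r$-partite graphs the balanced one is the unique robust maximizer of $\cN(K_{1,h-1},\cdot)$, and in particular $\ex(n,K_{1,h-1},K_{r+1})=\cN(K_{1,h-1},T(n,r))(1+o(1))$. Combining, $\cN(K_{1,h-1},G)\ge\ex(n,K_{1,h-1},K_{r+1})-O(\delta)n^h-o(n^h)$, and since $K_{1,h-1}$ is complete bipartite, Theorem~\ref{multipa} gives a complete $r$-partite graph $K$ at edit distance $o_\delta(n^2)$ from $G$ (meaning $\varepsilon'n^2$ with $\varepsilon'\to0$ as $\delta\to0$). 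Then $\cN(K_{1,h-1},K)$ is still within $o_\delta(n^h)$ of the maximum, so the optimization forces the parts of $K$ to be nearly equal, and hence $K$ (and with it $G$) is at edit distance $o_\delta(n^2)$ from $T(n,r)$. For $\delta$ small and $n$ large this is below $\varepsilon n^2$, a contradiction.

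The main obstacle is the case when $H$ has \emph{no} dominating vertex. Then bounding $\inj(H,G)$ by splitting over the image of a single vertex, or indeed of any proper subgraph, is genuinely lossy — it discards the edges of $H$ between the chosen set and its complement and carries spare factors of $n$ — so the resulting inequality is not tight for $T(n,r)$ and cannot pin down the structure of $G$. Handling it requires the more intricate counting of \cite{mnnrw}, which works through the image of a carefully chosen connected ``core'' of $H$ and balances contributions across host vertices; reconstructing this is where the bulk of the effort goes. It is also precisely where the upgrade from the exact extremal statement of \cite{mnnrw} to \emph{stability}, and the extraction of the explicit threshold $r\ge 300h^9$, demand the promised ``more careful analysis'': one needs explicit estimates on $P(H,r)/r^h$ and on the number of copies of $H$ destroyed when the classes of a complete $r$-partite graph are unbalanced, in place of the soft compactness argument that would suffice to prove the statement merely for ``$r$ sufficiently large''.
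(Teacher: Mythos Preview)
Your inductive plan is quite different from the paper's proof, and the gap you already flag --- the case when $H$ has no dominating vertex --- is exactly where it stalls. But the paper does \emph{not} handle that case by any ``carefully chosen connected core'' of $H$ or by balancing contributions across host vertices; there is no case distinction and no induction on $h$ at all. It treats every $H$ with an edge uniformly, in about a page, via F\"uredi's stability theorem together with a density lemma quoted verbatim from \cite{mnnrw}.

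The actual argument runs as follows. With $\beta=1/100h^6$: (i) as long as some vertex has degree below $(1-\beta)n$, symmetrize it to a vertex $v_0$ maximizing $\inj(v_0)$; one checks each such step raises $\inj(H,\cdot)$ by at least $\beta n^{h-1}/2$, so there are at most $2\delta n/\beta$ steps and one ends with a $\beta$-dense $K_{r+1}$-free graph $G'$ still at edit distance more than $\varepsilon n^2/2$ from $T(n,r)$. (ii) Take a maximum $r$-partite subgraph $G''\subseteq G'$; F\"uredi's Theorem~\ref{fure} gives $e(G'')\ge 2e(G')-e(T(n,r))$, and a short calculation shows $G''$ is $1/6h^3$-dense. (iii) Apply Lemma~\ref{densi}, which for dense $r$-partite $G''$ gives
\[
\inj(H,T(n,r))-\inj(H,G'')\;\ge\;2e(H)\bigl(1-3\beta h^3\bigr)\bigl(e(T(n,r))-e(G'')\bigr)n^{h-2},
\]
and combine it with the trivial $\inj(H,G'')\ge\inj(H,G')-2e(H)(e(G')-e(G''))n^{h-2}$ and with $e(T(n,r))-e(G'')\ge\varepsilon n^2/4$ to force $\inj(H,T(n,r))>\inj(H,G)+\delta n^h$, a contradiction.

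So the entire substance is that Lemma~\ref{densi} linearly converts an edge deficit into an $\inj(H,\cdot)$ deficit once the host is dense, and nothing about $H$ beyond $e(H)$ enters. Your link-graph reduction to $K_{1,h-1}$ via Theorem~\ref{multipa} is a legitimate alternative in the dominating-vertex case (modulo the slip that $\inj(H,T(n,r))=(P(H,r)/r^h+o(1))\,n^h$, not $(1-o(1))n^h$), but it is more involved and does not extend, whereas the density-lemma route bypasses the difficulty altogether. The only genuinely new wrinkle over \cite{mnnrw} is step~(i): since $G$ is merely near-extremal rather than extremal, a single symmetrization no longer contradicts anything, and one must iterate and bound the number of iterations by $O(\delta n)$ so that the edit distance to $T(n,r)$ is essentially preserved.
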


Recall that we denote by $h$ the number of vertices in $H$. \cite{mnnrw} presents an informal outline of the proof. The first idea is that most of the maps from $H$ to $G$ that are not injective homomorphisms fail due to a single edge being mapped to a non-edge. Therefore, an approximate bound on $\inj(H,G)$ can be given that depends on $e(G)$. As the Tur\'an graph contains the most edges among $K_{r+1}$-free graphs, one can obtain the result if the error terms in ``approximately'' can be controlled. The same holds in our setting. 

A key part where they use that $G$ maximizes $\inj(H,G)$ among $K_{r+1}$-free graphs is the symmetrization. They prove that the minimum degree in $G$ is large by showing that one can symmetrize a low-degree vertex to a vertex in the most copies of $H$ and increasing the number of copies of $H$. This is not a contradiction in our setting. However, we can repeatedly apply such symmetrization steps to remove all the low degree vertices, and the edit distance of the resulting graph is close to the edit distance of the original graph to $T(n,r)$.

We will use a stability theorem of F\"uredi \cite{fur}.

\begin{theorem}[F\"uredi \cite{fur}]\label{fure}
    Every $n$-vertex $K_{r+1}$-free graph $G$ contains an $r$-partite
subgraph $G_0$ such that
$e(G)-e(G_0)\le e(T(n,r))-e(G)$.
\end{theorem}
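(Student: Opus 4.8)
The plan is to prove Theorem~\ref{fure} by induction on $r$, following the short symmetrization-free argument. The base case $r=1$ is immediate: a $K_2$-free graph has no edges, the graph $T(n,1)$ is edgeless, and a $1$-partite subgraph $G_0$ is edgeless, so $e(G)-e(G_0)=0=e(T(n,1))-e(G)$.

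For the inductive step, assume the statement for $r-1$ and let $G$ be a $K_{r+1}$-free graph on $n$ vertices. Pick a vertex $v$ of maximum degree and set $d:=\deg_G(v)=\Delta(G)$, $U:=N_G(v)$ (so $|U|=d$) and $W:=V(G)\setminus U$ (so $v\in W$ and $|W|=n-d$). The induced subgraph $G[U]$ is $K_r$-free, since a $K_r$ inside $U$ together with $v$ would form a $K_{r+1}$ in $G$. By the inductive hypothesis applied to $G[U]$, there is a partition $U=U_1\cup\dots\cup U_{r-1}$ for which the number of edges of $G$ lying inside the classes $U_i$ is at most $e(T(d,r-1))-e(G[U])$. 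Now consider the $r$-partition $U_1,\dots,U_{r-1},W$ of $V(G)$ and let $G_0$ be the spanning subgraph of $G$ formed by the edges joining distinct classes; this is an $r$-partite subgraph of $G$, and the number of edges of $G$ inside classes satisfies
$$e(G)-e(G_0)\le \bigl(e(T(d,r-1))-e(G[U])\bigr)+e(G[W]).$$
Writing $e(G)=e(G[U])+e(G[W])+e(U,W)$, where $e(U,W)$ is the number of edges between $U$ and $W$, the inequality $e(G)-e(G_0)\le e(T(n,r))-e(G)$ that we must prove rearranges to
$$e(T(d,r-1))+2e(G[W])+e(U,W)\le e(T(n,r)).$$

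To finish, observe that $2e(G[W])+e(U,W)=\sum_{w\in W}\deg_G(w)\le (n-d)\,d$, since every vertex of $G$ has degree at most $d$. So it suffices to prove the arithmetic inequality $e(T(d,r-1))+d(n-d)\le e(T(n,r))$ for all $0\le d\le n$. For this, let $d_1,\dots,d_{r-1}$ be the part sizes of $T(d,r-1)$; the complete $r$-partite graph with part sizes $d_1,\dots,d_{r-1},n-d$ is $K_{r+1}$-free, and it has exactly $e(T(d,r-1))+(n-d)\sum_{i=1}^{r-1}d_i=e(T(d,r-1))+d(n-d)$ edges, so by Tur\'an's theorem this number is at most $e(T(n,r))$. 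This closes the induction.

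The only step that is not mechanical is the choice of induction: one induces on $G[N(v)]$ for a maximum-degree vertex $v$ rather than on a vertex-deleted subgraph, and one must notice that the two contributions not directly handled by the inductive hypothesis — the edges inside $W$ and the edges between $U$ and $W$ — are exactly the ones bundled into $\sum_{w\in W}\deg_G(w)\le(n-d)d$. The load-bearing point is the arithmetic inequality $e(T(d,r-1))+d(n-d)\le e(T(n,r))$; it holds with equality when $d$ equals the degree of a vertex of $T(n,r)$, which is precisely what makes the estimate tight enough, and its proof is the one-line appeal to Tur\'an's theorem above.
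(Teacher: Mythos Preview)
The paper does not give its own proof of this theorem; it simply quotes it as a result of F\"uredi and cites \cite{fur}. Your proof is correct and is in fact essentially F\"uredi's original short argument: induct on $r$, pass to the neighbourhood of a maximum-degree vertex, and reduce everything to the arithmetic inequality $e(T(d,r-1))+d(n-d)\le e(T(n,r))$, which follows from Tur\'an's theorem applied to a complete $r$-partite graph with parts $d_1,\dots,d_{r-1},n-d$.
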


We will also use the following lemma from \cite{mnnrw}.

\begin{lemma}[\cite{mnnrw}]\label{densi}
 For every graph $H$ with at least one edge, every $0<\beta\le 1/4$, and every
$r$-partite $\beta$-dense $n$-vertex graph $G$ we have
\[\inj(H,T(n,r))-\inj(H,G)\ge 2e(H)(1-3\beta h^3)(e(T(n,r))-e(G))n^{h-2}.\]
\end{lemma}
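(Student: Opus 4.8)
The plan is to estimate $\inj(H,G)$ and $\inj(H,T(n,r))$ by one and the same inclusion--exclusion over which edges of $H$ are mapped to non-edges, and to read the bound off the difference of the two leading terms. Writing $(x)_k:=x(x-1)\cdots(x-k+1)$, for a graph $J$ on $n$ vertices and $F\subseteq E(H)$ let $N_F(J)$ be the number of injective maps $V(H)\to V(J)$ sending every edge of $F$ to a non-edge of $J$; then $\inj(H,J)=\sum_{F\subseteq E(H)}(-1)^{|F|}N_F(J)$, with $N_{\varnothing}(J)=(n)_h$ and $N_{\{e\}}(J)=2\overline e(J)\,(n-2)_{h-2}$ for a single edge $e$, where $\overline e(J)=\binom n2-e(J)$. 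Subtracting the expansions for $G$ and $T(n,r)$, the $|F|=0$ terms cancel and the $|F|=1$ terms contribute exactly $2e(H)\bigl(\overline e(G)-\overline e(T(n,r))\bigr)(n-2)_{h-2}=2e(H)\bigl(e(T(n,r))-e(G)\bigr)(n-2)_{h-2}$. As $(n-2)_{h-2}\ge(1-h^2/n)\,n^{h-2}$, this is already essentially the claimed quantity, so what remains is to show that the $|F|\ge2$ terms shift it by at most $3\beta h^3\,e(H)\bigl(e(T(n,r))-e(G)\bigr)n^{h-2}$ in total. (For $n$ bounded in terms of $h$ the inequality is checked directly, and for $n<h$ both sides vanish, so I assume $n$ is large.)

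If $\beta\ge\frac1{3h^3}$ the right-hand side is $\le0$ and, since $e(T(n,r))\ge e(G)$, there is nothing to prove; so assume $\beta<\frac1{3h^3}$. Then every part of $G$ has at most $\beta n$ vertices, hence $r\ge\lceil1/\beta\rceil>3h^3\ge h$; also $e(G)=\tfrac12\sum_v\deg(v)\ge\tfrac12(1-\beta)n^2$ forces $D:=e(T(n,r))-e(G)\le\tfrac12\beta n^2$, and $\overline G$ has maximum degree $\le\beta n$ (as does $\overline{T(n,r)}$, a disjoint union of near-equal cliques). I route the comparison through the complete $r$-partite graph $K$ on the parts $U_1,\dots,U_r$ of $G$ and write $D=D_1+D_2$, where $D_2=e(K)-e(G)$ counts the ``between-class'' non-edges of $G$ (so $G\subseteq K$) and $D_1=e(T(n,r))-e(K)=\tfrac12\bigl(\sum_i|U_i|^2-\sum_i\bar n_i^2\bigr)\ge0$ is the imbalance deficiency, $\bar n_1,\dots,\bar n_r$ being the part sizes of $T(n,r)$. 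It suffices to prove $\inj(H,K)-\inj(H,G)\ge 2e(H)\bigl(1-O(\beta h^3+h^2/n)\bigr)D_2 n^{h-2}$ and $\inj(H,T(n,r))-\inj(H,K)\ge2e(H)\bigl(1-O(\beta h^3+h^2/n)\bigr)D_1 n^{h-2}$.

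The first inequality counts injective homomorphisms of $H$ into $K$ using at least one of the $D_2$ missing between-class pairs, and a two-term Bonferroni estimate is enough: for each oriented edge of $H$ and each missing pair $xy$, the number of injective homomorphisms of $H$ into $K$ that extend that edge onto $xy$ is at least $(n-2)_{h-2}-e(H)\beta n^{h-2}=\bigl(1-O(\beta h^3+h^2/n)\bigr)n^{h-2}$ (a placement fails only when some edge of $H$ is forced inside a single class, and each class has $\le\beta n$ vertices), while the double count is $O(\beta h^3 D_2 n^{h-2})$, since two missing pairs on independent $H$-edges contribute $O(D_2^2 n^{h-4})=O(\beta D_2 n^{h-2})$ using $D_2\le\tfrac12\beta n^2$, and two missing pairs meeting at an $H$-vertex contribute $O\bigl(\sum_x\binom{d_x}2 n^{h-3}\bigr)=O(\beta D_2 n^{h-2})$ using that each vertex lies in $d_x\le\beta n$ missing pairs and $\sum_x d_x=2D_2$. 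For the second inequality, $K=K_{\vec n}$ and $T(n,r)=K_{\vec{\bar n}}$ are both complete $r$-partite, so for them $\#\{\phi:F\text{ monochromatic}\}=\prod_{\text{blocks }B}p_{|B|}(\vec n)$ depends on $F$ only through the partition $\pi(F)$ of $V(H)$ into components of $(V(H),F)$, where $p_t(\vec n)=\sum_i n_i^{\,t}$. Grouping the $F$'s by $\pi$, using Whitney's identity to replace $\sum_{F'\text{ connected spanning }B}(-1)^{|F'|}$ by a coefficient $c_B$ with $|c_B|\le(|B|-1)!$, and folding in the (lower-order) correction that enforces injectivity, one gets $\inj(H,K_{\vec n})=\sum_{\pi}\bigl(\prod_B c_B\bigr)\prod_B p_{|B|}(\vec n)+(\text{lower order})$; the partitions with at most one block of size $\ge2$ supply $2e(H)D_1 n^{h-2}(1-O(h^2/n))$ since $p_2(\vec n)-p_2(\vec{\bar n})=2D_1$, and after telescoping the products of power sums the rest is controlled by the power-sum inequality $p_t(\vec n)-p_t(\vec{\bar n})\le C_t(\beta n)^{t-2}D_1$ for $3\le t\le h$, the residual combinatorial factors (Bell numbers, factorials, the $C_h$) being swamped by $\beta^{\,\mathrm{rank}(F)-1}\le\beta<\frac1{3h^3}$.

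The main obstacle is exactly this power-sum inequality (equivalently, control of the higher inclusion--exclusion terms of the rebalancing step) by a multiple of $D_1$ rather than of $\beta n^2$: a priori $p_t(\vec n)-p_t(\vec{\bar n})$ can be of order $(\beta n)^{t-2}\cdot n/r$ while $D_1=\tfrac12\bigl(p_2(\vec n)-p_2(\vec{\bar n})\bigr)$ is only a constant, so no crude absolute bound works and one must use both $\|\vec n\|_1=\|\vec{\bar n}\|_1=n$, $\|\vec n\|_\infty\le\beta n$ and the near-cancellation when $\vec n$ is close to $\vec{\bar n}$. I would pair the entries of $\vec n$ and $\vec{\bar n}$ in decreasing order (killing the ``free'' relabelling of the $\lceil n/r\rceil$-classes that produces the spurious $n/r$ term), set $n_i=\bar n_i+\delta_i$, and split: if some part is $\gg n/r$ then $D_1$ is already at least $\tfrac12\bigl(\|\vec n\|_\infty^2-(n/r)^2\bigr)$ up to controllable error, which absorbs $p_t(\vec n)-p_t(\vec{\bar n})\le\|\vec n\|_\infty^{\,t-2}p_2(\vec n)$; otherwise all parts are $\le 2n/r$ and a second-order Taylor expansion of $x\mapsto x^t$ about the balanced vector bounds $p_t(\vec n)-p_t(\vec{\bar n})$ by $O\bigl((n/r)^{t-2}(D_1+\sqrt{rD_1}+r)\bigr)$, where the $\sqrt{rD_1}$ and $r$ terms live only at scale $n/r\le\beta n$ and so are $\le C_t(\beta n)^{t-2}D_1$ once $\beta\ge1/r$ is used. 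Combining the three estimates gives $\inj(H,T(n,r))-\inj(H,G)\ge2e(H)(1-3\beta h^3)\bigl(e(T(n,r))-e(G)\bigr)n^{h-2}$.
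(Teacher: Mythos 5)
This lemma is quoted from \cite{mnnrw} and the paper gives no proof of it, so there is no in-paper argument to compare yours against; I can only assess the proposal on its own terms. Your overall architecture --- inclusion--exclusion over the edges of $H$ mapped to non-edges, isolating the $|F|=1$ term $2e(H)(e(T(n,r))-e(G))(n-2)_{h-2}$, and routing the error analysis through the complete $r$-partite closure $K$ with $D=D_1+D_2$ --- is sensible, and the $G$-versus-$K$ half is essentially complete: the Bonferroni lower bound with the two overcount estimates (independent pairs via $D_2\le\beta n^2/2$, adjacent pairs via $d_x\le\beta n$) genuinely produces errors proportional to $\beta D_2 n^{h-2}$ with polynomial-in-$h$ factors, which fits inside $3\beta h^3$.

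The genuine gap is in the $K$-versus-$T(n,r)$ half, precisely at the step you yourself flag as the main obstacle. The power-sum inequality $p_t(\vec n)-p_t(\vec{\bar n})\le C_t(\beta n)^{t-2}D_1$ is true, but your proposed proof of it does not work: in the balanced case you arrive at an error of order $(n/r)^{t-2}\bigl(\sqrt{rD_1}+r\bigr)$ and claim it is absorbed into $C_t(\beta n)^{t-2}D_1$ ``once $\beta\ge 1/r$ is used.'' That absorption requires $r\le C_t(\beta r)^{t-2}D_1$, which fails outright when $D_1$ is bounded (e.g.\ $D_1=2$, arising from a single unit of imbalance) while $r$ is large; the $\sqrt{rD_1}$ term fails similarly. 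The statement should instead be proved by discrete rebalancing: move one vertex at a time from a largest part to a smallest part; each step decreases $p_2$ by $2(n_{\max}-n_{\min}-1)\ge n_{\max}-n_{\min}$ and decreases $p_t$ by at most $t(n_{\max}^{t-1}-n_{\min}^{t-1})\le t(t-1)(\beta n)^{t-2}(n_{\max}-n_{\min})$, and summing over the steps gives the inequality with $C_t\approx 2t(t-1)$, with no spurious $r$-dependent terms. Separately, your final assertion that the remaining combinatorial factors (``Bell numbers, factorials, the $C_h$'') are ``swamped by $\beta^{\operatorname{rank}(F)-1}$'' is not justified as written: with only $\beta<1/(3h^3)$ available, bounding the rank-$k$ contribution by (number of partitions of $[h]$)$\times k!\times h^2$ does not come in under $3\beta h^3$. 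Closing this requires using that only partitions spanned by forests in $E(H)$ occur (at most $\binom{e(H)}{k}$ of them), that $|c_B|\le(|B|-1)!$ telescopes against that count, and that the per-block constant is $C_{|B|}$ rather than $C_h$; with that bookkeeping the higher-rank total is $O(h^2\beta)\cdot 2e(H)D_1n^{h-2}$ and the constant $3\beta h^3$ is attainable, but none of this accounting is carried out in the proposal, and the edge case $\beta n=O(1/h)$ (where $(n-2)_{h-2}/n^{h-2}$ alone eats the whole $3\beta h^3$ budget) also needs to be disposed of explicitly rather than by the remark about ``$n$ bounded in terms of $h$.''
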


\begin{proof}[Proof of Theorem \ref{inje}]
Let $\beta=1/100h^6$, we are given $\varepsilon>0$, and let $\delta=\delta(\varepsilon,r,H)$ be small enough. Assume that the statement does not hold, i.e., we have an $n$-vertex graph $G$ with $\inj(H,G)\ge \inj(H,T(n,r))-\delta n^h$ and edit distance more than $\varepsilon n^2$ to $T(n,r)$. We can also assume that $n$ is sufficiently large.

First we show that we can replace $G$ by a $\beta$-dense graph $G'$. 
We use the following estimate from \cite{mnnrw}. For any vertex $v$, we have

\[\inj(v)\le hn^{h-1}-(n-\deg(v))n^{h-2}.\]

They also gave a lower bound on the largest $\inj(v)$ using the assumption that $\inj(H,G)\ge \inj(H,T(n,r))$. The same calculation, using our weaker assumption shows that
there is a vertex $v_0$ with 
\[\inj(v_0)\ge (h(1-2h^2/r)-\delta)n^{h-1}.\]

\vskip4mm

If $\deg(v)\le n-\beta n$, then $\inj(v)\le hn^{h-1}-\beta n^{h-1}$. Now we symmetrize $v$ to $v_0$ to obtain a graph $G_1$. As in \cite{mnnrw}, $G_1$ is $K_{r+1}$-free and $\inj(H,G_1)\ge \inj(H,G)-\inj(v)+\inj(v_0)-h^2n^{h-2}$. If $r$ and $n$ are large enough and $\delta$ is small enough, then this implies $\inj(H,G_1)\ge \inj(H,G)+\beta n^{h-1}/2$.

We repeat this, as long as there is a vertex of degree at most $n-\beta n$. This can happen at most $2\delta/\beta n$ times by our assumption on $\delta$. Therefore, the resulting graph $G'$ has edit distance more than $(\varepsilon-2\delta/\beta)n^2\ge \varepsilon n^2/2$ from $T(n,r)$. In the last bound we use that $\delta$ is sufficiently small.

Now we can proceed as in \cite{mnnrw}. We pick an $r$-partite subgraph $G''$ of $G'$ with the most edges. Theorem \ref{fure} gives that $e(G'')\ge 2e(G')-e(T(n,r))$. Clearly $e(T(n,r))-e(G'')$ is at least half the edit distance of $G'$ and $T(n,r)$. Indeed, to obtain $G'$ from $T(n,r)$, first we delete at least $e(T(n,r))-e(G'')$ edges, then we add at most $e(T(n,r))-e(G'')$ (otherwise we obtained a $K_{r+1}$-free graphs with more edges than $T(n,r)$, contradicting Tur\'an's theorem).

As in \cite{mnnrw}, we have
\begin{equation}\label{elev}
    \inj(H,G'')\ge \inj(H,G')-2e(H)(e(G')-e(G''))n^{h-2}.
\end{equation}

We will show that $G''$ is $1/6h^3$-dense, as in \cite{mnnrw}. The proof of this statement is exactly the same, we include it for sake of completeness.
 
We have that $e(G'')\ge e(G')-(n^2/2-e(G'))\ge (1-1/50h^6)n^2/2$. Let $V_1,\dots,V_r$ be the parts of $G''$. As $e(G'')\le n^2-\sum_{i=1}^r |V_i|^2$, we have that $\max_{1\le i\le r}|V_i|\le n/7h^3$. By maximality of $G''$, we have that $\deg_{G''}(v)\ge \deg_{G'}(v)-\max|V_i|\ge n- \beta n -n/7h^3\ge 2n/13h^3$.

Now we can apply Lemma \ref{densi} to show

\[\inj(H,T(n,r))\ge \inj(H,G'')+ 2e(H)(1-6 h^3/13h^3)(e(T(n,r))-e(G''))n^{h-2}.\]

Applying the bounds $e(T(n,r))-e(G'')\ge \varepsilon n^2/4$ and (\ref{elev}), we obtain
\begin{equation*}
\begin{split}\inj(H,T(n,r)) &\ge   \inj(H,G')-2e(H)(e(G')-e(G''))n^{h-2}+ 2e(H)(e(T(n,r))-e(G''))n^{h-2}+e(H)\varepsilon n^h/52\\
&\ge   \inj(H,G')+e(H)(e(G'')+e(T(n,r))-2e(G'))n^{h-2}+\varepsilon n^h/52> \inj(H,G)+\delta n^h,\end{split}\end{equation*}

where the last inequality uses $e(G'')\ge 2e(G')-e(T(n,r))$ and that $\delta$ is sufficiently small. Clearly this contradicts our assumption on $\inj(H,G)$, completing the proof.    
\end{proof}

\section{Proof of Theorem \ref{cycles}}

We say that a (possibly infinite) sequence of graphs $F_1,\dots,F_i,\dots$ is \textit{nice} if for any $i$, if we take two non-adjacent vertices $u,v$ in $F_i$ and connect both to $N(u)\cup N(v)$, then the resulting graph $F_i'$ contains some $F_j$ with $j<i$.

Observe that by definition, $F_1$ is a clique $K_k$. Moreover, if the sequence contains graphs with chromatic number $m<k$, then the first graph with chromatic number at most $m$ must also be a clique.

The prime example is the sequence of odd cycles $C_{2k+1}$ (with the natural ordering). Indeed, adding any chord to an odd cycle creates a shorter odd and a shorter even cycle.  If $(F_i)$ and $(F_j')$ form nice sequences, consider the graphs $F_i+F_j'$. There is a natural partial ordering: $F_i+F_j'$ is before $F_k+F_\ell'$ if they are not equal and $i\le k$, $j\le \ell$. Extending this to any total ordering, we obtain a nice sequence.

A family $\cF$ of graphs is 
 \textit{nice} if its elements can be ordered to form a nice sequence.  Let $\cC_{2k+1}^{\mathrm{odd}}=\{C_3,C_5,\dots,C_{2k+1}\}$.

\begin{proposition}\label{nice}
 Let $\cF$ be nice, $G$ be an $\cF$-free graph and $G'$ be obtained by symmetrizing a vertex $u$ to a non-adjacent vertex $v$. Then $G'$ is $\cF$-free.   
\end{proposition}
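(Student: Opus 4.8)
The plan is to argue by contradiction: suppose $G'$ contains a copy of some $F_j\in\cF$. Since all edges of $G'$ that are not already present in $G$ are incident to $u$, and $G'$ restricted to $V(G)\setminus\{u\}$ equals $G$ restricted to $V(G)\setminus\{u\}$, any copy of $F_j$ in $G'$ that avoids $u$ is already present in $G$, contradicting that $G$ is $\cF$-free. Hence every such copy must use the vertex $u$.

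Next I would use the fact that in $G'$ the neighborhood of $u$ is $N_{G'}(u)=N_G(u)\cup N_G(v)$, and in particular $u$ and $v$ have exactly the same neighborhood in $G'$ among the other vertices (note $u\not\sim v$ since they were non-adjacent and symmetrization does not create the edge $uv$). So $u$ and $v$ are ``twins'' (non-adjacent, same neighborhood) in $G'$. Now take a copy of $F_j$ in $G'$ using $u$. If this copy does not use $v$, then I can replace $u$ by $v$ in the copy — since $v$ has all the neighbors $u$ does — obtaining a copy of $F_j$ in $G'$ not using $u$, hence a copy in $G$, contradiction again. Therefore any copy of $F_j$ in $G'$ must use \emph{both} $u$ and $v$; but then $u$ and $v$ are two non-adjacent vertices of $F_j$, each joined (within the copy) to everything in $N_{F_j}(u)\cup N_{F_j}(v)$.

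Here is where the niceness of $\cF$ enters. Because $F_j\in\cF$ and $u,v$ form a non-adjacent pair in $F_j$ both connected to $N(u)\cup N(v)$, the definition of a nice sequence says that identifying the roles correctly, $F_j$ contains as a subgraph some $F_{j'}$ with $j'<j$ — more precisely, the graph obtained from $F_j$ by connecting $u$ and $v$ to $N_{F_j}(u)\cup N_{F_j}(v)$ contains such an $F_{j'}$, but our copy of $F_j$ in $G'$ already realizes exactly this augmented graph (since $u$ and $v$ are twins in $G'$, all the required extra edges are present). Thus $G'$ contains $F_{j'}$ with $j'<j$. Choosing $j$ minimal among all members of $\cF$ that embed into $G'$ yields the contradiction, and hence $G'$ is $\cF$-free.

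The main subtlety — the part I would be most careful about — is matching up the abstract niceness condition (which speaks of two designated non-adjacent vertices $u,v$ of $F_i$ being joined to $N(u)\cup N(v)$) with the concrete situation inside the copy of $F_j$ in $G'$: one must check that in the $G'$-copy the two vertices playing the roles of $u_{G'}$ and $v_{G'}$ really are non-adjacent in $F_j$ and really do have all of $N_{F_j}(\cdot)\cup N_{F_j}(\cdot)$ in their $G'$-neighborhoods, so that the augmented graph from the definition of niceness is genuinely a subgraph of $G'$. Once that bookkeeping is in place, the well-foundedness of the ordering on $\cF$ (finitely or infinitely many graphs, but indices in $\mathbb{N}$, so there is a least offending index) closes the argument. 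Everything else is routine.
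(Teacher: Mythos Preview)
Your argument follows essentially the same route as the paper's: pick a minimal-index $F_j$ appearing in $G'$, show any copy must use $u$ (only new edges are at $u$), then must use $v$ (else swap $u$ for $v$), then use that $u$ and $v$ are non-adjacent twins in $G'$ to realize the augmented graph $F_j'$ inside $G'$ and invoke niceness to get an $F_{j'}$ with $j'<j$, contradicting minimality.

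One slip to fix: in this paper, symmetrizing $u$ to $v$ \emph{replaces} the neighborhood of $u$ by that of $v$, so $N_{G'}(u)=N_G(v)$, not $N_G(u)\cup N_G(v)$ as you wrote. With your formula the claim that $u$ and $v$ are twins in $G'$ would fail in general (it would require $N_G(u)\subseteq N_G(v)$). With the correct definition the twin property holds on the nose, and the rest of your argument goes through unchanged.
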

\begin{proof}
    Consider the smallest $i$ such that $F_i$ is in $G'$. A copy of $F_i$ must contain both $u$ (since only $u$ is incident to new edges) and $v$ (since otherwise $u$ could be replaced by $v$ in the copy of $F_i$). Since $u$ and $v$ have the same neighborhood in $G'$, $F_i'$ is a subgraph of $G'$, thus $F_j$ is a subgraph of $G'$ for some $j\le i$, contradicting our choice of $i$.
\end{proof}

So far, this does not extend the known results from \cite{gyps} much. Recall that symmetrizing $u$ to $v$ or $v$ to $u$ does not decrease the number of copies of $H$ if $H$ is comlete multipartite. This shows that if $H$ is a complete $r$-partite graph, then $\ex(n,H,\cF)=\cN(H,T)$ for some complete multipartite $n$-vertex graph $T$. However, if the smallest chromatic number in $\cF$ is $m$, then $G$ is $K_m$-free, which already implies that a complete $(m-1)$-partite $n$-vertex graph is the extremal. Yet, surprisingly we can use Proposition \ref{nice} to prove Theorem \ref{cycles}. The other ingredient we use is a well-known result of Andrásfai, Erd\H os, and Sós \cite{aes}: if a $\cC_{2k+1}^{\mathrm{odd}}$-free $n$-vertex graph is not bipartite, then it contains a vertex of degree at most $2n/(2k+1)$. This assumption is missing for other nice sequences.

Recall that Theorem \ref{cycles} states that
    for every bipartite graph $H$, if $k>(2h)^{h+2}$, then
    $H$ is weakly $C_{2k+1}$-Tur\'an-stable. 
    In other words, there is a bipartite graph $B$ and for any $\varepsilon>0$ there is $\delta>0$ such that if a $C_{2k+1}$-free $n$-vertex graph $G$ contains at least $\ex(n,H,C_{2k+1})-\delta n^h$ copies of $H$, then the edit distance of $G$ and $B$ is at most $\varepsilon n^2$.

\begin{proof}[Proof of Theorem \ref{cycles}]
If $H$ has isolated vertices, they do not affect anything, thus we can assume $H$ does not have them.

Given $\varepsilon$, we let $\alpha>0$ be sufficiently small, then $\delta$ sufficiently small compared to $\alpha$ and $\varepsilon$, and $n$ be sufficiently large. Let $G$ be a $C_{2k+1}$-free graph with at least $\ex(n,H,C_{2k+1})-\delta n^h$ copies of $H$. A result of Alon and Shikhelman \cite{alon} determines when $\ex(n,H_1,F_1)=o(n^{|V(H_1)|})$. It implies that
there are $o(n^{2m+1})$ copies of $C_{2m+1}$ for each $m<k$ in $G$. Therefore, by the removal lemma, there are at most $\alpha n^2/k$ edges in $G$ such that each copy of $C_{2m+1}$ contains at least one of those edges. Let $G'$ be the graph obtained by deleting those edges for each $m<k$, then clearly $G'$ is $\cC_{2k+1}^{\mathrm{odd}}$-free and $\cN(H,G')\ge \cN(H,G)-\alpha n^{|V(H)|}$.

By the theorem of Andrásfai, Erd\H os, and Sós we mentioned before the proof, $G'$ contains a vertex of degree at most $2n/(2k+1)$.
    Let $u,v\in G'$ be non-adjacent vertices, then by Proposition \ref{nice}, symmetrizing $u$ to $v$ results in a $\cC_{2k+1}^{\mathrm{odd}}$-free graph. 

    Let $\beta n^h$ be the largest number of copies of $H$ in bipartite graphs. To obtain a lower bound on $\beta$, we consider $K_{\lfloor n/2\rfloor,\lceil n/2\rceil}$. We pick the $h$ vertices one by one, each at least $\lfloor n/2\rfloor$ ways. Then we may count a copy multiple times, but at most $h!$ times. Therefore, $\beta\ge\lfloor n/2\rfloor^h/h!n^h> 1/(2h)^h$.

    Clearly, $\cN(H,G')\ge (\beta-\alpha-\delta) n^h$, thus there is a vertex $v$ of $G'$ in at least $(\beta-\alpha-\delta) hn^{h-1}$ copies of $H$. Let $u$ be a vertex of degree at most $2n/(2k+1)$, then $u$ is in at most $2hn^{h-1}/(2k+1)$ copies of $H$. Indeed, we pick which vertex is mapped to $u$, there are at most $2n/(2k+1)$ choices for at least one neighbor and at most $n$ choices for the other vertices. Now we symmetrize $u$ to $v$ to obtain $G''$ (in the case $u$ and $v$ are adjacent, we delete the edge between them first). We have $\cN(H,G'')\ge \cN(H,G')+(\beta-\alpha-\delta) hn^{h-1}-2hn^{h-1}/(2k+1)-n^{h-2}$, where the last term is for copies of $H$ that contain both $u$ and $v$. By choosing $\alpha$ and $\delta$ small enough and $n$ large enough, we have $\cN(H,G'')\ge \cN(H,G')+\beta n^{h-1}/2$.

   We apply this procedure repeatedly, until we obtain a bipartite graph $B$. If there are at most $(\varepsilon -\alpha)n/2$ symmetrization steps, then the edit distance of $G'$ and $B$ is at most $(\varepsilon -\alpha)n^2$, since we change at most $2n$ edges at each step. Therefore, the edit distance of $G$ and $B$ is at most $\varepsilon n^2$.

 If there are more than $(\varepsilon -\alpha)n/2>\varepsilon n/3$ symmetrization steps, then we gained more than $\varepsilon\beta n^h/6$ copies of $H$, thus $\cN(H,B)\ge \cN(H,G')+\varepsilon\beta n n^{h-1}/6\ge \cN(H,G)+\varepsilon\beta n^h/6-\alpha n^h$. By picking $\delta<\varepsilon \beta/6-\alpha$, we obtain a contradiction with the stability assumption.
 
\end{proof}

\section{Concluding remarks}

In our statements and proofs we aimed at simplicity rather than strongest possible results. In particular, we did not try to optimize the thresholds in Theorems \ref{event} or \ref{cycles}. In the case of Theorem \ref{event}, we used the threshold $300h^9$ from \cite{mnnrw}, where the Tur\'an-goodness was proved under the same assumptions. Note that there the threshold was not optimized either. These two thresholds may be the same, and it is possible that the threshold could be as low as $h+1$. In the case of Theorem \ref{cycles}, our threshold is exponential in $h$. 
We expect that the sharp threshold is actually smaller than $h$.

Another possible improvement could be to study some other notions of stability. In all our bounds, $\delta$ depends linearly on $\varepsilon$. Therefore, we actually proved \textit{perfect stability}, as described in \cite{perfstb}. Another strengthening in \cite{perfstb} is that instead of Zykov symmetrization, they study a more general version. They only assume that at most $\varepsilon n^2$ edges change at each step, the value of the graph parameter does not decrease, and after some steps we arrive to a complete multipartite graph. It is possible that for some graph $H$, a symmetrization can be defined such that $\cN(H,G)$ does not decrease and the graph remains $K_{r+1}$-free, although we could not find any other example.
 Our Theorem \ref{multipa} also holds for such graphs $H$, since we use symmetrization only to obtain $G'$.

\bigskip

\textbf{Funding}: Research of Gerbner was supported by the National Research, Development and Innovation Office - NKFIH under the grants SNN 129364, FK 132060, and KKP-133819


\begin{thebibliography}{99}

\bibitem{alon} N. Alon, C. Shikhelman. Many $T$ copies in $H$-free graphs. \textit{Journal of Combinatorial
Theory, Series B}, \textbf{121}, 146--172, 2016.


\bibitem{aes} B. Andrásfai, P. Erd\H os, V.T. Sós. On the connection between chromatic number, maximal clique and minimal degree of a graph. \textit{Discrete Mathematics}, \textbf{8}(3), 205--218, 1974.


\bibitem{Bollobas} B. Bollob\' as. On complete subgraphs of different orders,  \textit{Math. Proc. Camb. Phil. Soc.}, \textbf{79}, 19-24, 1976.

\bibitem{BrSid} J. I. Brown and A. Sidorenko. The inducibility of complete bipartite graphs, \textit{J. Graph Theory} \textbf{18}, 629–645, 1994.

\bibitem{erd1} P. Erd\H os. Some recent results on extremal problems in graph theory, \textit{Theory of Graphs} (Internl. Symp. Rome), 118--123, 1966.

\bibitem{erd2} P. Erd\H os. On some new inequalities concerning extremal properties of graphs, in Theory of Graphs  (ed P.
Erd\H os, G. Katona), Academic Press, New York, 77--81, 1968.

\bibitem{fur} Z. F\"uredi. A proof of the stability of extremal graphs, Simonovits’ stability from
Szemer\'edi’s regularity. \textit{J. Combin. Theory Ser. B}, 115:66–71, 2015.

\bibitem{gerb2} D. Gerbner. Some stability and exact results in generalized Turán problems, \textit{Studia Sci. Math. Hungarica}, accepted.

\bibitem{gerb3} D. Gerbner. On weakly Turán-good graphs, \textit{arXiv preprint} arXiv:2207.11993, 2022.

\bibitem{gerbi} D. Gerbner. Some exact results for non-degenerate generalized Tur\'an
problems, \textit{arXiv preprint} arXiv:2209.03426, 2022.

		\bibitem{gerpal} D. Gerbner, C. Palmer, Some exact results for generalized Tur\'an problems, \textit{European Journal of Combinatorics}, \textbf{103}, 103519, 2022. 

\bibitem{gp} D. Gerbner, B. Patkós, (2021). Generalized Tur\'an results for intersecting cliques, \textit{arXiv preprint} arXiv:2105.07297, 2021.

		\bibitem{gyps} E. Gy\H ori, J. Pach, M. Simonovits, On the maximal number of certain subgraphs
in $K_r$-free graphs, \textit{Graphs and Combinatorics}, \textbf{7}(1), 31--37, 1991.

\bibitem{hh} D. Hei, X. Hou, The cycle of length four is strictly $ F $-Tur\'an-good, \textit{arXiv preprint} arXiv:2208.05386, 2022.

\bibitem{LW}
E.L. Liu, J. Wang,  The Generalized Tur\'{a}n Problem of Two Intersecting Cliques, \textit{arXiv preprint} arXiv:2101.08004, 2021.

\bibitem{perfstb} H. Liu, O. Pikhurko, M. Sharifzadeh, and K. Staden. Stability from graph symmetrisation arguments with applications to inducibility, \textit{arXiv preprint}  arXiv:2012.10731, 2020.

\bibitem{mq} J. Ma, Y. Qiu, Some sharp results on the generalized Tur\'an numbers. \textit{European Journal of Combinatorics}, \textbf{84}, 103026, 2018.

\bibitem{mnnrw} N. Morrison, JD Nir, S. Norin, P. Rza{\.z}ewski, A. Wesolek.
Every graph is eventually Turán-good. \textit{arXiv preprint} arXiv:2208.08499, 2022.

\bibitem{ShThomas} R. H. Schelp and A. G. Thomason. A remark on the number of complete and empty subgraphs, \textit{Combin. Probab. Computing}, \textbf{7}, 217–220, 1998.

\bibitem{simi} M. Simonovits. Extremal graph problems with symmetrical extremal graphs. Additional chromatic conditions, \textit{Discrete Math.} \textbf{7}, 349--376, 1974.

\bibitem{turan} P. Tur\'{a}n, On an extremal problem in graph theory, \emph{Mat. Fiz. Lapok} 48(1941), 436-452.

\bibitem{zykov} A. A. Zykov. On some properties of linear complexes.\textit{ Matematicheskii Sbornik},
\textbf{66}(2), 163--188, 1949.


\end{thebibliography}
\end{document}